\def\titlerunning#1{\gdef\titrun{#1}}
\def\author#1{\gdef\autrun{\def\and{\unskip, }#1}\gdef\@author{#1}}
\def\address#1{{\def\and{\\\hspace*{18pt}}\renewcommand{\thefootnote}{}%
\footnote {#1}}%
\markboth{\autrun}{\titrun}}
\def\email#1{e-mail: #1}
\def\subjclass#1{{\renewcommand{\thefootnote}{}%
\footnote{\emph{Mathematics Subject Classification (2010):} #1}}}
\def\keywords#1{\par\medskip
\noindent\textbf{Keywords.} #1}
\newtheorem{theorem}{Theorem}[section]
\newtheorem{lemma}[theorem]{Lemma}
\newtheorem{corollary}[theorem]{Corollary}
\newtheorem{proposition}[theorem]{Proposition}
\newcommand{\fl}{\mathcal{F}}
\newcommand{\Aut}{\mathrm{Aut}}
\newcommand{\gr}{\mathcal{G_M}}
\newcommand{\Mon}{\mathrm{Mon}}
\newcommand{\oo}{\mathcal{O}}
\newcommand{\Orb}{\mathrm{Orb}}
\newcommand{\ma}{\mathcal{M}}
\begin{document}


\baselineskip=17pt


\titlerunning{Medial symmetry type graphs}

\title{Medial symmetry type graphs}

\author{Isabel Hubard
  \and Alen Orbani\'c
  \and Toma\v{z} Pisanski
\and Mar\'ia del R\'io Francos
}


\maketitle

\address{I. Hubard: Instituto de Matem\'{a}ticas, Universidad Nacional Aut\'{o}noma de M\'{e}xico, M\'{e}xico;\\
               \email{hubard@matem.unam.mx }
\and
                A. Orbani\'c: Faculty of Mathematics, Physics and Mechanics, University of Ljubljana, Slovenia; 
               \email{alen.orbanic@fmf.uni-lj.si}
\and        
               T. Pisanski: Faculty of Mathematics, Physics and Mechanics, University of Ljubljana, Slovenia; 
               \email{tomaz.pisanki@fmf.uni-lj.si}
\and
               M. del R\'io Francos: Institute of Mathematics Physics and Mechanics, University of Ljubljana, Slovenia;
	    \email{maria.delrio@fmf.uni-lj.si}
}

\subjclass{52B15, 57M05, 05B45, 05C25, 05C30, 68E10}


\begin{abstract}

A $k$-orbit map is a map with its automorphism group partitioning the set of flags into $k$ orbits. Recently $k$-orbit maps were studied by Orbani\' c, Pellicer and Weiss, for $k \leq 4$. In this paper we use symmetry type graphs to extend such study and classify all the types of $5$-orbit maps, as well as all self-dual, properly and improperly, symmetry type of $k$-orbit maps with $k\leq 7$. Moreover, we  determine, for small values of $k$, all types of $k$-orbits maps that are medial maps.  Self-dualities constitute an important tool in this quest.

\keywords{Symmetry type graph, medial map, k-orbit map, flag graph.}
\end{abstract}

\section{Introduction}

Exploring symmetry and its boundaries has been a driving force of the progress of mathematics already in ancient times with the Platonic and Archimedean solids being a prime example. Regular maps and polytopes represent a modern \\
generalization of the Platonic solids. They have a common feature, namely, that the group of automorphisms acts regularly on the set of elements, usually, called flags, that constitute the object under investigation.

Maps on closed surfaces may be completely described by trivalent edge-colored graph, known as the \emph{flag graph}.
This is equivalent to the description proposed by Lins in 1982 \cite{Lins}. For a modern treatment of the subject compare \cite{AGT}.
The \emph{symmetry type graph} of a map is a trivalent edge-colored factor of its flag graph obtained from the action of the group of automorphisms of the map on the flags. This notion is equivalent to the Delaney-Dress symbol described in \cite{DressHuson87}.  An application to mathematical chemistry is given in \cite{DrBr96} and a strategy of how to generate them is shown in \cite{CompSymTypeGraph}. In the symmetry type graph, as it name says, one can find enough information in regard to symmetries of certain type of map. This graph is of our interest since it lets us know if the map is regular, transitive on either vertices, edges or faces, among other properties that will be studied through this paper.

The \emph{medial }of a map is a map that arises from a similar operation to the truncation on a map \cite{FowlerPisanski,PisanskiRandic,PiZi}. This is one of the Wythoffian constructions that can be performed on an arbitrary map on a surface.
By Wythoffian construction, we refer in particular to drop a perpendicular to the hypotenuse in a right-angle triangle, from the vertex in the right-angle; in our case this triangle is a flag of the original map. The medial map is called ``1-ambo" by John Conway \cite{Conway}. Some of these Wythoffian operations have been used in different contexts \cite{FowlerPisanski, PisanskiRandic} and were described by Pisanski and \v{Z}itnik in a recent chapter 8 of \cite{PiZi}.
The medial operation can be described as subdivision of flag triangles (see for example \cite{MonGp_Self-Inv,PiZi}). Equivalently, we may describe this operation as rules transforming the flag graph of the original map to the flag graph of its medial, and work with its symmetry type graph. 

This paper is organised as follows.
Section~\ref{sec:maps} gives an introduction to maps, the dual and Petrie-dual maps of a given map as well as to the medial operation.
In Section~\ref{sec:STG} we develop the concept of the symmetry type graph of a map and enumerate all symmetry type graphs of maps with at most 5 flag orbits.
Moreover, we analyse how the dualities of a map work on its symmetry type graph to define the extended symmetry type graph of a self-dual map. We further enumerate all extended symmetry type graphs of self-dual maps with at most 7 flag orbits. The reader is referred to \cite{OperationsMaps} and \cite{Self-DualSelf-Petrie} for further details.
Section~\ref{sec:med} deals with how to obtain the symmetry type graph of the medial of a map, by operations on the (extended) symmetry type graph of the map. We enumerate all the medial symmetry type graphs with at most 7 vertices.
 In particular we show that every type of edge-transitive map is a medial type.

\section{Maps}
\label{sec:maps}

We start this section by giving the basic theory of maps, their flag graphs and monodromy groups. We also review the concepts of duality and Petrie-duality; we introduce properly and improperly self-dual maps, as well as the medial operation on maps.

We shall say that a {\em map} $\ma$ is a $2$-cellular embedding of a connected graph $G$ on a compact surface without boundary, in the sense that the graph separates the surface into simply connected regions.
The vertices and edges of the map are the same as those of its underlying graph, and the faces of $\ma$ are described by some {\em distinguished closed walks} of $G$, in such a way that each edge of $G$ is in either exactly two distinguished cycles, or twice on the same one. (Note that the distinguished cycles of $G$ can be identified with the simply connected regions obtained by removing the graph from the surface.)
The set of vertices, edges and faces of $\ma$ will be denoted by $V$, $E$ and $F$, respectively.
And, for convenience, we shall often refer to them as the $0$-, $1$- and $2$-faces of $\ma$, respectively.
A \emph{flag} of a map is defined by an ordered triple $\{v,e,f\}$ of mutually incident vertex, edge and face of the map. The set of all flags of $\ma$ will be denoted by $\fl(\ma)$. We shall say that a map $\ma$ is {\em equivelar} if all its faces have the same number of edges (say $p$), and its vertices have the same valency (say $q$). In this case we say that $\ma$ has {\em Sch\"afli type} $\{p,q\}$.

By selecting one point in the interior of each $i$-face of $\ma$, one can identify a flag $\Phi :=\{v,e,f\}$ with the triangle with vertices $v$, and the chosen interior points of $e$ and $f$.
 By doing this with every flag of $\ma$, we obtain the {\em barycentric subdivision} $BS(\ma)$ of a map $\ma$. 
Hence, $BS(\ma)$ is a triangular map on the same surface, where its vertices correspond either to a vertex, the midpoint of an edge, or the chosen point in the interior of a face of $\ma$.
This gives a natural colouring on the vertex set of the barycentric subdivision, with colours $0, 1, 2$, depending on whether the vertex of $BS(\ma)$ corresponds to a vertex,  an edge or a face of $\ma$.
Hence, each 2-face of the map $\ma$ is decomposed into triangles of $BS(\ma)$, all of them having the same vertex of colour 2.

Note that each triangle $\Phi$ of $BS(\ma)$ shares an edge with exactly three other triangles that we shall denote by  $\Phi^0, \Phi^1$ and $\Phi^2$, where the triangles $\Phi$ and $\Phi^i$ share the vertices of colours $j$ and $k$, but differ on that of colour $i$. These triangles correspond to {\em adjacent} flags of $\ma$, or more specifically, to {\em $i$-adjacent} flags, if their corresponding triangles in $BS(\ma)$ differ exactly on the vertex of colour $i$.
We extend this notation by induction in the following way, $(\Phi^{i_0,i_1,\dots, i_{k-1}})^{i_k} = \Phi^{i_0,i_1,\dots, i_{k}}$ and note that $\Phi^{i,i}=\Phi$ for  $i=0,1,2$ and $\Phi^{0,2} = \Phi^{2,0}$, for every flag $\Phi$.
Moreover, the connectivity of the underlying graph of $\ma$ implies that given any two flags $\Phi$ and $\Psi$ of $\ma$, there exist integers $i_0,i_1,\dots, i_{k} \in \{0, 1, 2\}$ such that $\Psi=\Phi^{i_0,i_1,\dots, i_{k}}$.

If for each flag of $\ma$ (faces of $BS(\ma)$) we assign a vertex, and define an edge between two of them whenever the corresponding flags are adjacent, we obtain a new graph.
We can naturally colour the edges of this graph with colours $0, 1, 2$ in such a way that the edge between any two $i$-adjacent flags has colour $i$.
The obtained $3$-edge-coloured graph is called the {\em flag graph} $\gr$ of $\ma$. (Note that this trivalent graph with the vertex set $\fl(\ma)$ defines the dual of the barycentric subdivision $BS(\ma)$, in the sense of Section \ref{dual}.)

To each map $\ma$ we can associate a subgroup of the permutation group of the flags of $\ma$ in the following way. Let $s_0, s_1, s_2$ be permutations of the set $\fl(\ma)$, acting on the right, such that for any flag $\Phi \in \fl(\ma)$,
                               $$\Phi \cdot s_i = \Phi^i;$$
for each $i = 0, 1, 2$. It is straightforward to see that these permutations generate a group $\Mon(\ma)$,  called the {\em monodromy (or connection) group} of the map $\ma$ \cite{MonGp_Self-Inv},
and satisfy the following properties.
\begin{itemize}
\item[(i)] $s_0$, $s_1$, and $s_2$ are fixed-point free involutions;
\item[(ii)] $s_0s_2 = s_2s_0$, and $s_0s_2$ is fixed-point free;
\item[(iii)] The group $\Mon(\ma)$ is transitive on $\fl(\ma)$.
\end{itemize}

One can then use the {\em distinguished generators} $s_0, s_1, s_2$ of $\Mon(\ma)$ to label the edges of the flag graph $\gr$ in a natural way, that is, each edge of colour $i$ is labelled with the generator $s_i$.
In this way, one can think of the walks among the edges of $\gr$ as words in $\Mon(\ma)$.
In fact, if $\Phi, \Psi \in \fl(\ma)$ are two flags such that $\Psi = \Phi^w$, for some $w=s_{i_0}s_{i_1} \dots s_{i_k} \in \Mon(\ma)$, then the walk of $\gr$ starting at the vertex $\Phi$ and traveling in order among the edges $i_0, i_1, \dots, i_k$ will finish at the vertex $\Psi$.
And vice versa, every walk among the coloured edges of $\gr$ starting at $\Phi$ and finishing at $\Psi$ induces a word $w\in\Mon(\ma)$ that satisfies that $\Phi^w=\Psi$.
Note however that in general the action of $\Mon(\ma)$ is not semiregular on $\fl(\ma)$, implying that one can have different ``coloured'' walks in $\gr$ going from $\Phi$ to $\Psi$ that induce different words of $\Mon(\ma)$ that act on the flag $\Phi$ in the same way.

In $\gr$, the edges of a given colour form a perfect matching (an independent set of edges containing all the vertices of the graph).
Hence the union of two sets of edges of different colour is a subgraph whose components are even cycles. Such subgraph is called a {\em 2-factor} of $\gr$.
In particular, note that since $(s_0s_2)^2 = 1$ and $s_0s_2$ is fixed-point free, the cycles with edges of alternating colours 0 and 2 are all of length four and
these 4-cycles define the set of edges on the map.
In other words, the edges of $\ma$ can be identified with the orbits of $\fl(\ma)$ under the action of the subgroup generated by the involutions $s_0$ and $s_2$; that is, $E(\ma)= \{ \Phi^{\langle s_0,s_2 \rangle} \mid \Phi \in \fl(\ma) \}$.

Similarly, we find that the vertices and faces of $\ma$ are identified with the respective orbits of the subgroups $\langle s_1,s_2 \rangle$ and $\langle s_0,s_1 \rangle$ on $\fl(\ma)$. That is, $V(\ma)= \{ \Phi^{\langle s_1,s_2 \rangle}  \mid \Phi \in \fl(\ma)  \}$ and $F(\ma)= \{ \Phi^{\langle s_0,s_1 \rangle}  \mid \Phi \in \fl(\ma) \}$.
Thus, the group $\langle s_0, s_1, s_2 \rangle$ acts on set of all $i$-faces of $\ma$ transitively, for each $i \in \{0,1,2\}$.

In particular, for each flag $\Phi \in \fl(\ma)$, the set
$(\Phi)_0:=\{\Phi^w \mid w \in \langle s_1,s_2 \rangle\}$ is the orbit of the flag $\Phi$ around a vertex of $\ma$. Similarly, $(\Phi)_1:=\{\Phi^w \mid w \in \langle s_0,s_2 \rangle\}$ and $(\Phi)_2:=\{\Phi^w \mid w \in \langle s_0,s_1 \rangle\}$ are the orbits of the flag $\Phi$ around an edge and a face of the map $\ma$. The following lemma states that in fact, for each $k$, $(\Phi)_k$ is precisely the set of flags containing the $k$-face of $\Phi$, and hence we can identify the $k$-face of $\Phi$ with the set $(\Phi)_k$.

\begin{lemma}
\label{k-face}
Let $\ma$ be a map, $\fl(\ma)$ its set of all flags and $\Mon(\ma)=\langle s_0, s_1, s_2 \rangle$ be the monodromy group of $\ma$. For each $\Phi \in \fl(\ma)$ and $k \in \{0,1,2\}$, let $(\Phi)_k:= \{ \Phi^w \mid w \in \langle s_i, s_j \rangle, \ i,j \neq k \}$.
  If $\Phi, \Psi \in \fl(\ma)$
and $k \in \{0,1,2\}$ are such that $(\Phi)_k \cap (\Psi)_k \neq \emptyset$, then $(\Phi)_k=(\Psi)_k$.
\end{lemma}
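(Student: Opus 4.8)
The plan is to recognize $(\Phi)_k$ as an orbit of a subgroup of $\Mon(\ma)$ acting on $\fl(\ma)$, after which the statement becomes the elementary fact that orbits of a group action partition the set. Concretely, fix $k\in\{0,1,2\}$ and let $H_k:=\langle s_i,s_j\rangle$, where $\{i,j\}=\{0,1,2\}\setminus\{k\}$; this is a subgroup of $\Mon(\ma)$, and by the definition given in the lemma, $(\Phi)_k=\{\Phi^w\mid w\in H_k\}$ is precisely the $H_k$-orbit of $\Phi$ under the right action of $\Mon(\ma)$ on $\fl(\ma)$.

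First I would record the three group-theoretic facts that make ``lying in the same $H_k$-orbit'' an equivalence relation on $\fl(\ma)$: reflexivity ($\Phi=\Phi^1$ with $1\in H_k$), symmetry (if $\Psi=\Phi^w$ with $w\in H_k$ then $\Phi=\Psi^{w^{-1}}$ and $w^{-1}\in H_k$), and transitivity (if $\Psi=\Phi^w$ and $\Theta=\Psi^{w'}$ with $w,w'\in H_k$ then $\Theta=\Phi^{ww'}$ with $ww'\in H_k$). Here one only uses that $H_k$ is closed under products and inverses; one may additionally note, using $s_i^2=s_j^2=1$, that the inverse of a word in $s_i,s_j$ is again such a word, but this is not strictly needed.

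Now suppose $(\Phi)_k\cap(\Psi)_k\neq\emptyset$, say $\Theta\in(\Phi)_k\cap(\Psi)_k$, so that $\Theta=\Phi^u=\Psi^v$ for some $u,v\in H_k$. Then $\Psi=\Theta^{v^{-1}}=\Phi^{uv^{-1}}$ with $uv^{-1}\in H_k$, so $\Psi\in(\Phi)_k$; moreover, for an arbitrary element $\Psi^w\in(\Psi)_k$ (with $w\in H_k$) we get $\Psi^w=\Phi^{uv^{-1}w}\in(\Phi)_k$, whence $(\Psi)_k\subseteq(\Phi)_k$. Exchanging the roles of $\Phi$ and $\Psi$ yields the reverse inclusion, so $(\Phi)_k=(\Psi)_k$.

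Finally, I would remark that, combined with the earlier identifications $V(\ma)=\{\Phi^{\langle s_1,s_2\rangle}\}$, $E(\ma)=\{\Phi^{\langle s_0,s_2\rangle}\}$, $F(\ma)=\{\Phi^{\langle s_0,s_1\rangle}\}$, this shows that $(\Phi)_k$ depends only on the $k$-face of $\Phi$ and not on the particular flag $\Phi$ chosen, which is what legitimises identifying the $k$-face of $\Phi$ with the set $(\Phi)_k$. The only point requiring a little care is the bookkeeping imposed by the right-action convention — ensuring that compositions such as $(\Phi^u)^{v^{-1}}=\Phi^{uv^{-1}}$ are written in the correct order — but there is no substantial obstacle here: the content of the lemma is just that group orbits partition.
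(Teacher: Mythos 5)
Your proof is correct and follows essentially the same route as the paper's: both identify $(\Phi)_k$ as the orbit of $\Phi$ under the subgroup $\langle s_i,s_j\rangle$ and verify the two inclusions by the same coset-style computation (your $\Psi^w=\Phi^{uv^{-1}w}$ is the paper's $\Phi^w=\Psi^{w_1w_0^{-1}w}$ with the roles of the flags exchanged). Nothing further is needed.
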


\begin{proof}
If $(\Phi)_k \cap (\Psi)_k \neq \emptyset$, then there exist $w_0, w_1 \in \langle s_i, s_j\rangle$ such that $\Phi^{w_0}=\Psi^{w_1}$. But for any $w \in \langle s_i,s_j\rangle$ we have that $\Phi^w = \Phi^{w_0w_0^{-1}w}=\Psi^{w_1w_0^{-1}w}$, and since $w_1w_0^{-1}w \in \langle s_i,s_j\rangle$, then $\Phi^w \in (\Psi)_k$, implying that $(\Phi)_k \subseteq (\Psi)_k$. A similar argument shows the other contention.
\end{proof}

Throughout the paper we treat the $k$-face of a flag $\Phi$ and the corresponding $(\Phi)_k$ as the same thing.

An {\em automorphism} of $\gr$ is a bijection of the vertices of $\gr$ that preserves the incidences of the graph; the set of all automorphisms of $\gr$ is the {\em automorphism group} $\Aut(\gr)$ of $\gr$. There are two interesting subgroups of $\Aut(\gr)$ associated with the graph $\gr$: the {\em colour respecting automorphism group} $\Aut_r(\gr)$,  consisting of all automorphisms of $\gr$ that induce a permutation of the colours of the edges,
and the {\em  colour preserving automorphism group} $\Aut_c(\gr)$, consisting of all automorphisms of $\gr$ that send two adjacent vertices by colour $i$ into other two adjacent by the same colour $i$. Clearly $\Aut_c(\gr) \leq \Aut_r(\gr) \leq \Aut(\gr)$.

A bijection $\gamma$ of the vertices, edges and faces of the map $\ma$ which preserves the map is called an {\em automorphism} of  $\ma$. We shall denote by $\Gamma(\ma)$ the group of automorphisms of $\ma$. Note that every automorphism of $\ma$ induces a bijection on the set of flags of $\ma$ that preserves adjacencies.
Thus, $\Gamma(\ma)$ can be seen as the subgroup of $Sym(\fl(\ma))$ that preserves the (coloured) adjacencies.
In other words, an automorphism of a map $\ma$ is an edge-colour preserving automorphism of the flag graph $\gr$; that is, $\Gamma(\ma) = \Aut_c(\gr)$. Moreover, a bijection $\gamma$ of $\fl(\ma)$ is an automorphism of the map $\ma$ if and only if it ``commutes'' with the distinguished generators $s_0, s_1, s_2$ of $\Mon(\ma)$. That is, for every $i =0,1,2$ and every $\Phi \in \fl(\ma)$, 
$\Phi^{s_i}\gamma=(\Phi\gamma)^{s_i}$.

Let $\gamma \in \Aut_c(\gr)$ be such that $\Phi \gamma = \Phi$ for some vertex $\Phi$ of $\gr$. Since $\gamma$ preserves the colours of the edges of $\gr$, it must fix all the edges incident to $\Phi$ and hence all neighbours of $\Phi$. It is not difficult to see that, by connectivity, $\gamma$ fixes all the vertices of $\gr$, as well as all its edges, that is, $\gamma$ is the identity element of $\Aut_c(\gr)$. This implies that the action of $\Aut_c(\gr)$ on the vertices of $\gr$ is semiregular and therefore the action of $\Gamma(\ma)$ is semiregular on the flags of $\ma$.
Hence,  all the orbits on flags under the action of $\Gamma(\ma)$ have the same size.

We say that the map $\ma$ is a {\em $k$-orbit map} whenever the automorphism group $\Gamma(\ma)$ has exactly $k$ orbits on $\fl(\ma)$. Furthermore, if $k=1$ (i.e. the action of $\Gamma(\ma)$ is  transitive on the flags), then we say that $\ma$ is a {\em regular} map.
A {\em chiral} map is a 2-orbit map such that every flag and its adjacent ones are  in different orbits.

\subsection{Dual and self-dual maps}
\label{dual}

A {\em duality} $\delta$ from a map $\ma$ to a map $\mathcal{N}$ is a bijection from the set of flags $\fl(\ma)$ of $\ma$ to the set of flags $\fl(\mathcal{N})$ of $\mathcal{N}$ such that for each flag $\Phi \in \fl(\ma)$ and each $i \in \{0,1,2\}$, $\Phi^i\delta = (\Phi\delta)^{2-i}$. If there exists a duality from $\ma$ to $\mathcal{N}$, we say that $\mathcal{N}$ is the {\em dual} map of $\ma$, and we shall denote it by $\ma^*$. Note that $(\ma^*)^*\cong \ma$.
In terms of the flag graphs, a duality can be regarded as a bijection between the vertices of $\gr$ and the vertices of $\mathcal{G_{\ma^*}}$ that sends edges of colour $i$ of $\gr$ to edges of colour $2-i$ of $\mathcal{G_{\ma^*}}$, for each $i \in \{0,1,2\}$.

If there exists a duality from a map $\ma$ to itself, we shall say that $\ma$ is a {\em self-dual map}. Given $\delta, \omega$ two dualities of a self-dual map $\ma$, and $\Phi \in \fl(\ma)$, we have that $\Phi^i\delta\omega=(\Phi\delta)^{2-i}\omega= (\Phi\delta\omega)^i$, implying that $\delta\omega$ is an automorphism of $\ma$.
Thus, the product of two dualities of a self-dual map is no longer a duality, but an automorphism of the map. In particular, the square of any duality is an automorphism. The set of all dualities and automorphisms of a map $\ma$ is called the {\em extended group} $\mathcal{D}(\ma)$ of the map $\ma$. The automorphism group $\Gamma(\ma)$ is then a subgroup of index at most two in $\mathcal{D}(\ma)$. In fact, the index is two if and only if the map is self-dual.

For each flag $\Phi \in \fl(\ma)$ denote by $\oo_\Phi$ the orbit of $\Phi$ under the action of ${\Gamma(\ma)}$, and denote by $\mathrm{Orb}(\ma) := \{\oo_\Phi \mid \Phi \in \fl(\ma)\}$ the set of all the orbits of $\fl(\ma)$ under $\Gamma(\ma)$. Hubard and Weiss showed the following very useful lemma in \cite{isasia}.

\begin{lemma}
\label{dualityaction}
Let $\ma$ be a self-dual map, $\delta$ a duality of $\ma$ and $\oo_1, \oo_2 \in \Orb(\ma)$.
If $\delta$ sends a flag from $\oo_1$ to a flag in $\oo_2$, then all the dualities send flags of $\oo_1$ to flags in $\oo_2$.
\end{lemma}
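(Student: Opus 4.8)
The plan is to show that every duality of $\ma$ differs from the given $\delta$ only by pre- and post-composition with automorphisms, and then to invoke the fact established just above the lemma that the product of two dualities is an automorphism (together with its easy companion, that an automorphism composed with a duality is again a duality). With these closure properties in hand the statement becomes a one-line orbit computation.

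Concretely, first I would record two elementary closure facts inside the extended group $\mathcal{D}(\ma)$, each proved by a direct check from the defining relation $\Phi^i\delta=(\Phi\delta)^{2-i}$ and the fact that automorphisms commute with $s_0,s_1,s_2$: (a) if $\delta$ is a duality then so is $\delta^{-1}$; and (b) if $\alpha\in\Gamma(\ma)$ and $\omega$ is a duality, then $\alpha\omega$ and $\omega\alpha$ are dualities. Combined with ``a product of two dualities is an automorphism'', this tells us exactly when a composite in $\mathcal{D}(\ma)$ is an automorphism, which is all we need.

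Now fix $\Phi\in\oo_1$ with $\Phi\delta\in\oo_2$ (the hypothesis), let $\omega$ be an arbitrary duality of $\ma$, and let $\Psi\in\oo_1$ be arbitrary. Since $\Phi$ and $\Psi$ lie in the same $\Gamma(\ma)$-orbit, pick $\alpha\in\Gamma(\ma)$ with $\Psi=\Phi\alpha$, and set $\gamma:=\delta^{-1}\alpha\omega$. By the closure facts, $\gamma$ is a product of the two dualities $\delta^{-1}$ and $\alpha\omega$, hence $\gamma\in\Gamma(\ma)$. Then
\[
\Psi\omega=\Phi\alpha\omega=\Phi\delta\bigl(\delta^{-1}\alpha\omega\bigr)=(\Phi\delta)\gamma\in\oo_2,
\]
because $\Phi\delta\in\oo_2$ and $\gamma$ is an automorphism. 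As $\omega$ and $\Psi$ were arbitrary, this proves that all dualities send flags of $\oo_1$ to flags in $\oo_2$.

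The only point demanding care — the ``main obstacle'', such as it is — is the bookkeeping with the right action and the composition order in $\mathcal{D}(\ma)$, making sure each intermediate composite is genuinely of the claimed type (duality versus automorphism). No deeper input is required; in particular semiregularity of $\Gamma(\ma)$ on $\fl(\ma)$ is not needed.
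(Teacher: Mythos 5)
Your proof is correct. Note that the paper does not actually prove this lemma --- it only cites it from Hubard and Weiss \cite{isasia} --- so there is no in-paper argument to compare against; your closure facts (that $\delta^{-1}$, $\alpha\omega$ and $\omega\alpha$ are dualities) all check out directly from the relation $\Phi^i\delta=(\Phi\delta)^{2-i}$, the decomposition $\gamma=\delta^{-1}(\alpha\omega)$ is indeed a product of two dualities and hence an automorphism, and the final orbit computation is exactly what is needed. You are also right that semiregularity plays no role.
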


This lemma allows us to divide the self-dual maps into two different classes.
Given $\ma$ a self-dual map, we say that $\ma$ is {\em properly self-dual} if its dualities preserve all flag-orbits of $\ma$.
Otherwise, we say that $\ma$ is {\em improperly self-dual}.

One can take a more algebraic approach in dealing with dual maps and dualities.
In fact, if $\ma$ is a map with monodromy group $\Mon(\ma)$ generated by $s_0, s_1, s_2$, the monodromy group of the dual map $\ma^*$ is generated by $s_2, s_1, s_0$.
Moreover, in \cite{MonGp_Self-Inv} was proved that a map $\ma$ is self-dual if and only if $d:\Mon(\ma) \to \Mon(\ma)$ sending $s_i$ to $s_{2-i}$ is a group automorphism such that $d(N)$ and $N$ are conjugated, where $N = Stab_{\Mon(\ma)}(\Phi)$ and $\Phi \in \fl(\ma)$.
In other words, this latter implies that $s_0 = d^{-1}s_2d$, $s_1 = d^{-1}s_1d$ and $s_2 = d^{-1}s_0d$.

\subsection{Petrie-dual map}

In \cite{Coxeter}, Coxeter introduced the Petrie-polygons and extended this concept to any dimension.
According to Coxeter, it was John Flinders Petrie who proposed the use of the ``zig-zag" polygons, in manner to find more regular polyhedra (with an infinite number of faces).
A {\em Petri polygon} is a ``zig-zag" path among the edges of a map $\ma$ in which every two consecutive edges, but not three, belong to the same face.
Note that each edge of a Petrie polygon appears either just once in exactly two different Petrie polygons of $\ma$, or twice in the same Petrie polygon of $\ma$.
Hence we can define a map with the same set of vertices and edges of $\ma$, but with the Petrie polygons as faces. This map is known as the {\em Petrie-dual} (or {\em Petrial}) map of $\ma$, and is denoted by $\ma^P$. If a map $\ma$ and its Petrie-dual $\ma^P$ are isomorphic maps, then $\ma$ is said to be {\em self-Petrie}.

Let $s_0, s_1, s_2$ be the distinguished generators of $\Mon(\ma)$.
Since the set of vertices and edges in $\ma^P$ coincide with those of $\ma$, then the set of flags of $\ma^P$ coincide with $\fl(\ma)$.
Even more, two flags in the flag graph $\mathcal{G}_{\ma^P}$ are 1-adjacent if and only if they are adjacent in the flag graph $\gr$ by $s_1$ (by the definition of the Petrie polygon).
However, recall that a walk along the $s_0$ and $s_1$ edges of $\gr$ define a face of $\ma$, but a face in $\ma^P$ corresponds to a \textquotedblleft zig-zag" path in $\ma$.
Hence, two flags $\Phi,\Psi\in\fl(\ma^P)$ are 0-adjacent in $\mathcal{G}_{\ma^P}$ if and only if $\Phi^{0,2} = \Psi$ in $\gr$.
Thus, the set of faces of $\ma^P$ is defined as $\{ \Phi^{\langle s_0s_2,s_1 \rangle} \mid \Phi \in \fl(\ma) \}$.
The 4-cycles that represent the edges of $\ma$ are no longer cycles of the flag-graph $\mathcal{G}_{\ma^P}$.
However, since a flag and its 2-adjacent flag in $\ma^P$ differ only on the face, and the vertices and edges of $\ma^P$ are the same as those of $\ma$, $\Phi$ and $\Psi$ are 2-adjacent in $\ma^P$ if and only if they are 2-adjacent in $\ma$.

Therefore, there is a bijection, $\pi$ say, between the vertices of $\gr$ and the vertices of $\mathcal{G}_{\ma^P}$ that preserves the colours 1 and 2, and interchanges each (0,2)-path by an edge of colour 0. The monodromy group of the Petrie dual map of $\ma$ is generated by $s_0s_2, s_1, s_2$ (where, as before, $s_0, s_1, s_2$ are the generators of $\Mon(\ma)$). Wilson and Lins, in \cite{Wilson} and \cite{Lins}, respectively, showed that for a map $\ma$, it can be seen that the bijection $\pi$ and the duality $\delta$ are operators on $\ma$ that generate a subgroup of $Sym(\fl(\ma))$ isomorphic to $S_3$; where $\delta \circ \pi \circ \delta = \pi \circ \delta \circ \pi$ is the third element of order two in it, which defines a bijection between the set of flags of $\ma$ and the set of flags of a map $\mathcal{N}$ known as the {\em opposite} map of $\ma$.

\subsection{The medial operation}

There is an interesting operation on maps called {\em the medial} of a map (see \cite{CRP,PisanskiRandic}). For any map $\ma$, we define the medial of $\ma$, $Me(\ma)$, in the following way.
The vertex set of $Me(\ma)$ is the edge set of $\ma$, two vertices of $Me(\ma)$ have an edge joining them if the corresponding edges of $\ma$ share a vertex and belong to the same face. This gives raise to a graph embedded on the same surface than $\ma$. Hence, the faces of $Me(\ma)$ are simply the connected regions of the complement of the graph on the surface. It is then not difficult to see that the face set of $Me(\ma)$ is in one to one correspondence with the set containing all faces and vertices of $\ma$.
Hence, it is straightforward to see that the medial of a map $\ma$ and the medial of its dual $\ma^*$ are isomorphic.

We note that every flag of the original map $\ma$ is divided into two flags of the medial $Me(\ma)$. In fact, given a flag $\Phi=\{v,e,f\}$ of $\ma$, one can write the two flags of $Me(\ma)$ corresponding to $\Phi$ as $(\Phi, 0):=\{e,\{v,f\},v\}$ and $(\Phi,2):=\{e, \{v,f\},f\}$.
It is then straightforward to see that the adjacencies of the flags of $Me(\ma)$ are closely related to those of the flags of $\ma$. In fact, we have that, if $S_0, S_1$ and $S_2$ are the distinguished generators of $\Mon(Me(\ma))$, then,
\begin{eqnarray*}
(\Phi,0)\cdot S_0=(\Phi\cdot s_1,0), \;\; (\Phi,0)\cdot S_1=(\Phi\cdot s_2,0), \;\; (\Phi,0)\cdot S_2=(\Phi,2), \\
(\Phi,2)\cdot S_0=(\Phi\cdot s_1,2), \;\; (\Phi,2)\cdot S_1=(\Phi\cdot s_0,2), \;\; (\Phi,2)\cdot S_2=(\Phi,0).
\end{eqnarray*}
Moreover, the valency of every vertex of a medial map $Me(\ma)$ is 4 and if the original map $\ma$ is equivelar of Schl\"afli type $\{p,q\}$ then the faces of $Me(\ma)$ are $p$-gons and $q$-gons. Therefore $Me(\ma)$ is equivelar if and only if $p=q$; in such case $Me(\ma)$ has Schl\"afli type $\{p,4\}$.

It is now easy to obtain the flag graph of $Me(\ma)$ from the flag graph of $\ma$. An algorithm showing how to do this is indicated in Figure~\ref{med-flag}.

  \begin{figure}[htbp]
      \begin{center}
\vspace{-9cm}
\includegraphics[width=12.5cm]{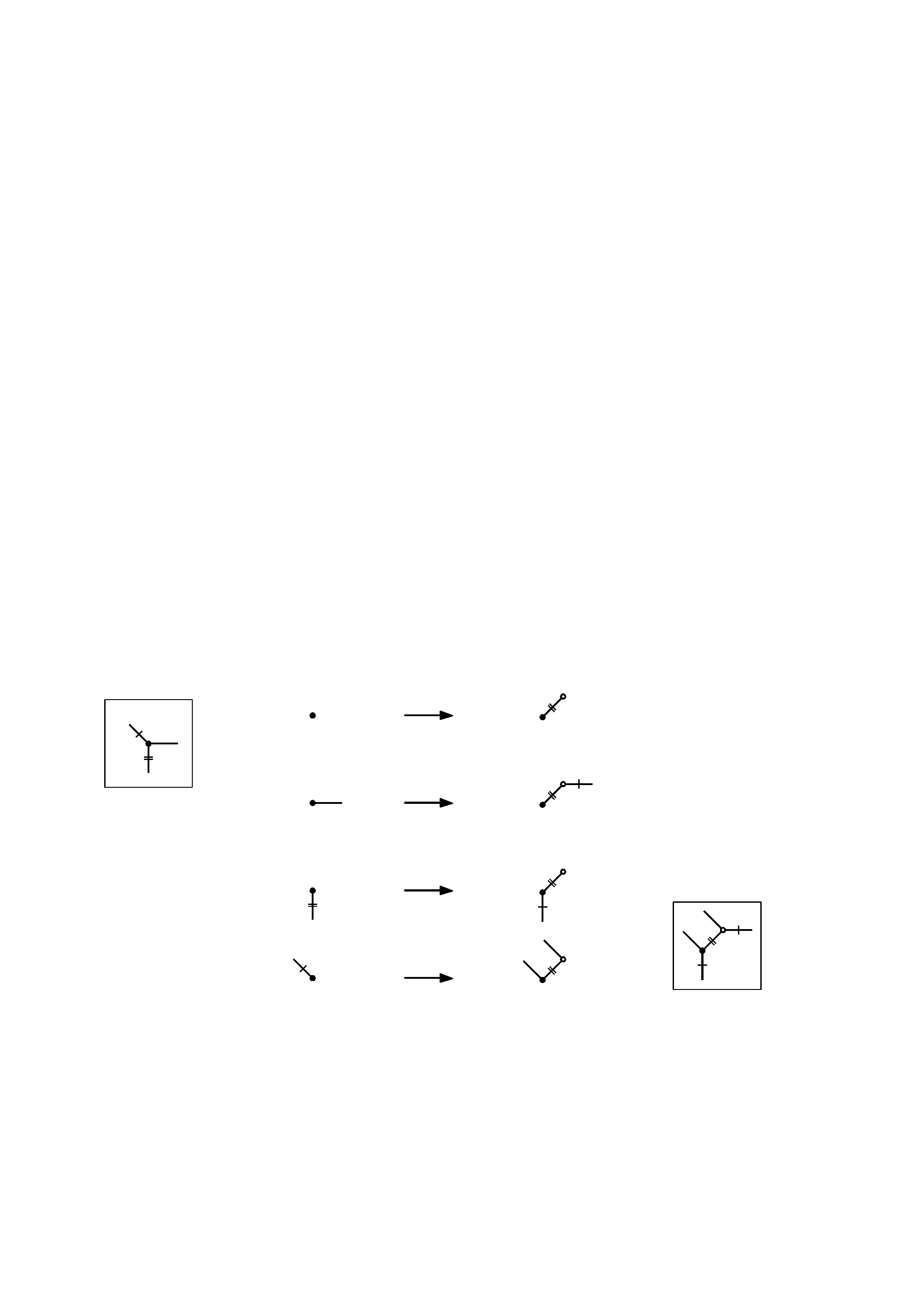}
\vspace{-4cm}
\caption{Any local representation of a flag, in the left. The result under the medial operation, locally obtained, in the right.}\label{med-flag}
      \end{center}
  \end{figure}

In \cite{MonGp_Self-Inv}, Hubard, Orbani\'c and Weiss showed that the automorphism group of the medial map $Me(\ma)$ of a map $\ma$ is isomorphic to the extended group $\mathcal{D}(\ma)$ of $\ma$, and used proper and improper self-dualities of the maps to characterize regular and 2-orbit medial maps, in terms of their symmetry type. In particular they showed that a medial map $Me(\ma)$ is regular if and only if $\ma$ is regular and self-dual. In \cite[Table 4]{MonGp_Self-Inv} we further observe that every 2-orbit symmetry type can be the medial map of a regular or a 2-orbit map.
In \cite{k-orbitM}, Orbani\'c, Pellicer and Weiss extended this to characterize the symmetry types of all medial maps of 2-orbit maps. They further proved that if $\ma$ is a $k$-orbit map, then $Me(\ma)$ is a $k$- or $2k$-orbit map, depending on whether or not $\ma$ is a self-dual map.

\section{Symmetry type graphs}
\label{sec:STG}

Let $\gr$ be the (edge-coloured) flag graph of a $k$-orbit map $\ma$, and $\mathrm{Orb}(\ma) := \{\oo_\Phi \mid \Phi \in \fl(\ma)\}$ the set of all the orbits of $\fl(\ma)$ under the action of $\Gamma(\ma)$.

We define the {\em symmetry type graph} $T(\ma)$ of $\ma$ to be the coloured factor pregraph of $\gr$ with respect to $\mathrm{Orb}(\ma)$. That is, the vertex set of $T(\ma)$ is the set of orbits $\mathrm{Orb}(\ma)$ of the flags of $\ma$ under the action of $\Gamma(\ma)$, and given two flag orbits $\oo_\Phi$ and $\oo_\Psi$, there is an edge
of colour $i$ between them if and only if there exists flags $\Phi'\in\oo_\Phi$ and $\Psi'\in\oo_\Psi$ such that $\Phi'$ and $\Psi'$ are $i$-adjacent in $\ma$. Edges between vertices in the same orbit shall factor into  semi-edges.

The simplest symmetry type graph arrises from regular maps. In fact, the symmetry type graph of a regular map has only one vertex and three semi-edges, one of each colour $0,1$ and $2$. Recall that the edges of $\ma$ are represented by 4-cycles of alternating colours 0-2 in $\gr$. Each of these 4-cycles should then factor into one of the five pregraphs in Figure~\ref{edges}.

\begin{figure}[htbp]
\begin{center}
\vspace{-1.5cm}
\includegraphics[width=12.5cm]{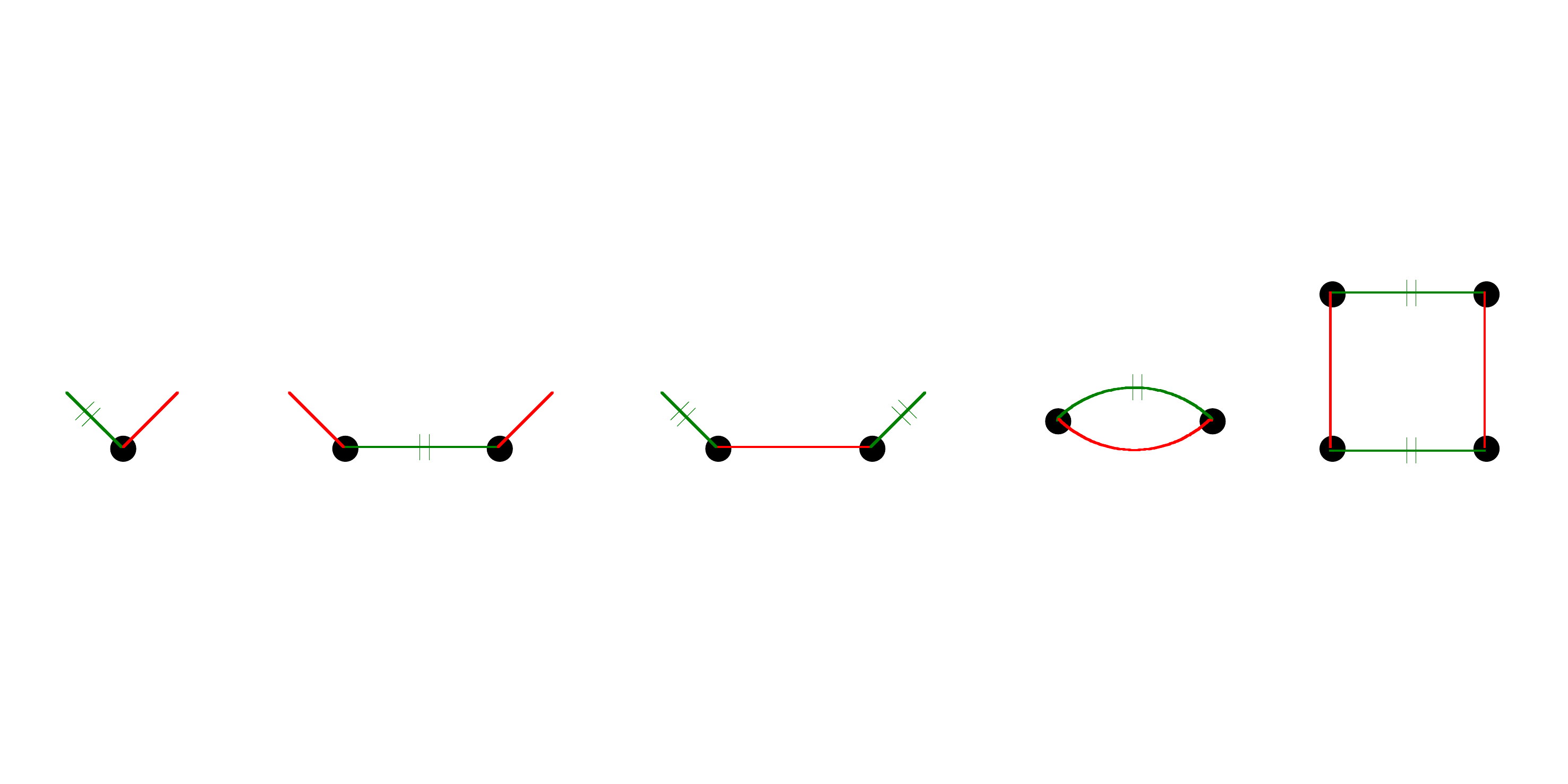}
\vspace{-2cm}
\caption{Possible quotients of 0-2 coloured 4-cycles.}
\label{edges}
\end{center}
\end{figure}

Clearly, if $\ma$ is a $k$-orbit map, then $T(\ma)$ has exactly $k$ vertices. Thus, the number of types of $k$-orbit maps depends on the number of 3-valent pregraphs on $k$ vertices that can be properly edge coloured with three colours and that the connected components of the 2-factor with colours $0$ and $2$ are always as in Figure~\ref{edges}.

It is then straightforward to see that there are exactly seven types of 2-orbit maps, shown in Figure~\ref{twoorbit}.
(The relations between some of these types, as shown in the figures, will be explained in Section \ref{duality}.)
These seven types of 2-orbit maps have been widely studied in different contexts, see for example \cite{rui} and \cite{2Polyh}; we follow \cite{2Polyh} for the notation of the types of the symmetry type maps with two flag orbits.

\begin{figure}[htbp]
\begin{center}
\vspace{-2cm}
\includegraphics[width=12.5cm]{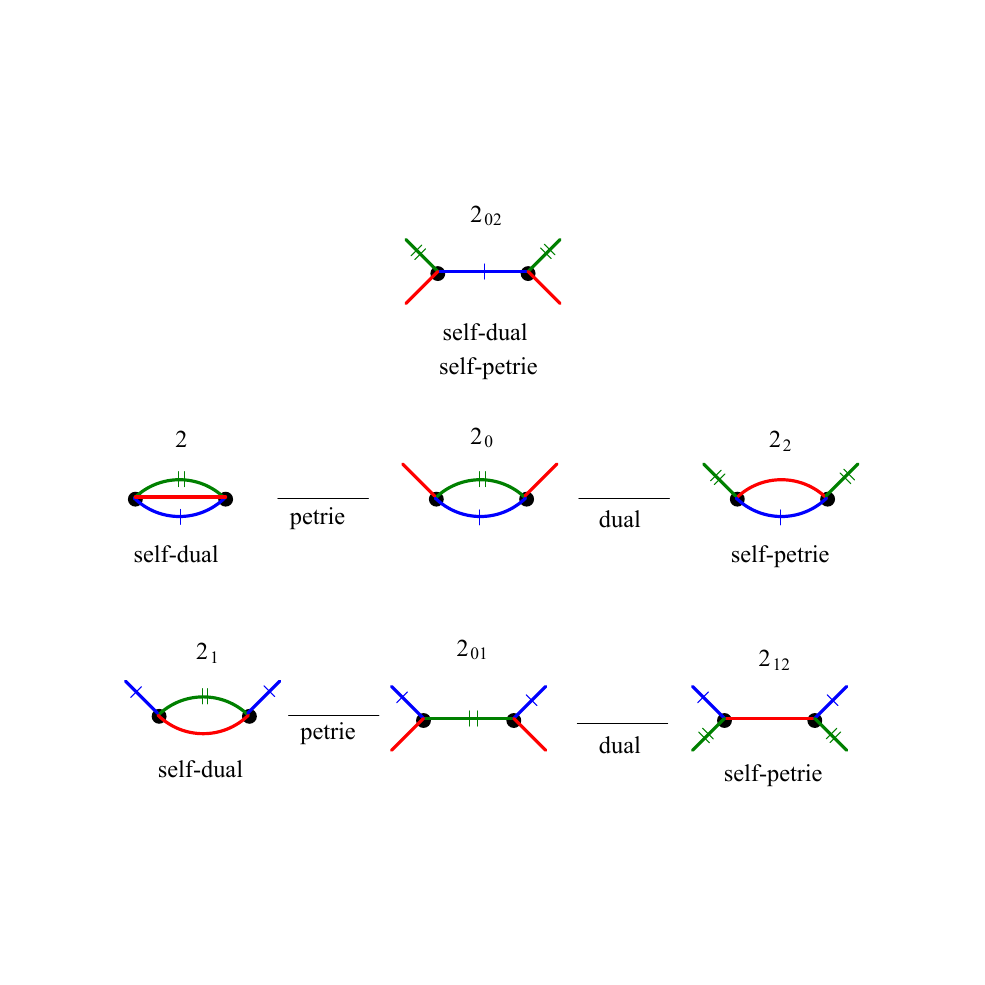}
\vspace{-2cm}
\caption{The seven symmetry type graphs of $2$-orbit maps.}
\label{twoorbit}
\end{center}
\end{figure}

In \cite{k-orbitM}, Orbani\'c, Pellicer, and Weiss studied all the types of $k$-orbit maps, for $k\leq 4$. It is likewise straightforward to see that there are only three 3-orbit maps types and these are shown in Figure~\ref{threeorbit}. For symmetry type graphs of maps of three and four orbits, we follow the notation of \cite{k-orbitM}.

\begin{figure}[htbp]
\begin{center}
\vspace{-8cm}
\includegraphics[width=12.5cm]{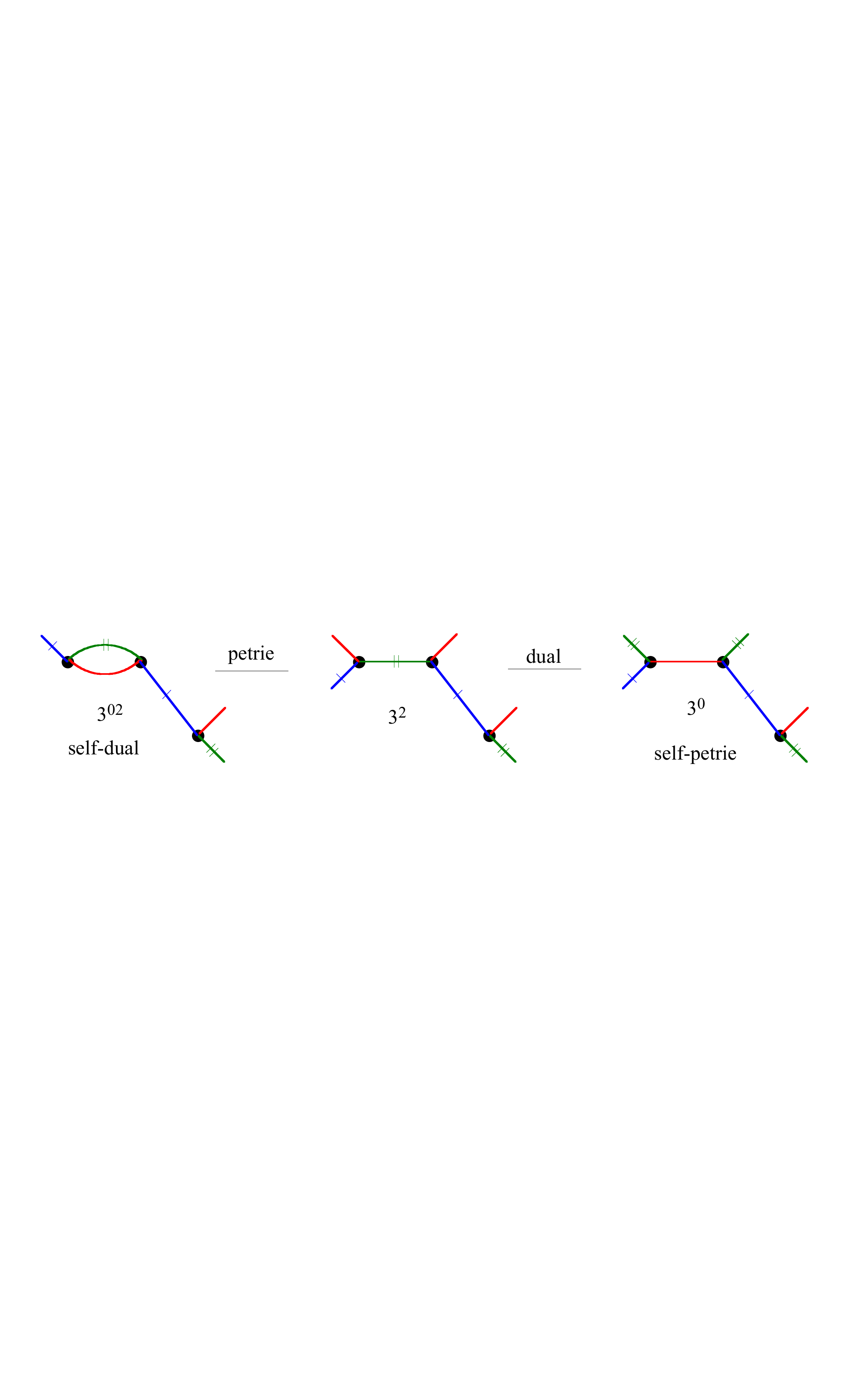}
\vspace{-9cm}
\caption{The three symmetry type graphs of $3$-orbit maps.}
\label{threeorbit}
\end{center}
\end{figure}

Because the automorphisms of a map $\ma$ preserves the colours of the edges of $\gr$, for every $\alpha \in \Gamma(\ma)$, $w \in \Mon(\ma)$ and $\Phi \in \fl(\ma)$ we have that $(\Phi^w) \alpha = (\Phi \alpha)^w$. We can therefore define the action of $\Mon(\ma)$ on the set $\mathrm{Orb}(\ma)$ as $\oo_\Phi \cdot w= \oo_{\Phi^w}$.
It is straightforward to see that this is a transitive action, as the one of $\Mon(\ma)$ on $\fl(\ma)$ is transitive.

The action of $\Mon(\ma)$ on $\mathrm{Orb}(\ma)$  can be easily seen on the symmetry type graph $T(\ma)$. In fact, in the same way as the words of $\Mon(\ma)$ can be seen as walks among the edges of $\gr$, they can be seen as walks among the edges of $T(\ma)$.
This immediately implies that, as $\Mon(\ma)$ acts transitively on $\mathrm{Orb}(\ma)$, the symmetry type graph $T(\ma)$ is connected.
A walk on $T(\ma)$ that starts at a vertex $\oo_\Phi$ and finishes at a vertex $\oo_\Psi$ corresponds to an element of $\Mon(\ma)$ that maps all the flags in the orbit $\oo_\Phi$ to flags in the orbit $\oo_\Psi$.
We can further see that closed walk among the edges of $T(\ma)$ that starts and finishes at a vertex $\oo_\Phi$ corresponds to an element of $\Mon(\ma)$ that permutes that flags of the orbit $\oo_\Phi$.

Given $i,j \in \{0,1,2\}$ with $i \neq j$, we say that an {\em $i$-$j$-walk} in $T(\ma)$ (or in $\gr$) is a walk along  edges of $T(\ma)$ (resp. $\gr$) of colours $i$ and $j$. The following lemma shall help us understand the orbits of the $k$-faces of a map $\ma$, in terms of the symmetry type graph of $\ma$.

\begin{lemma}
Let $\ma$ be a map with symmetry type graph $T(\ma)$.
For any two flags $\Phi$ and $\Psi$ of $\ma$, there is an $i$-$j$-walk in $T(\ma)$ between the vertices $\oo_\Phi$ and $\oo_\Psi$ of $T(\ma)$ if and only if $(\Phi)_k$ and $(\Psi)_k$ (with $k \neq i,j$) are in the same orbit of $k$-faces under the action of $\Gamma(\ma)$.
\end{lemma}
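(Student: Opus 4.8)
The plan is to translate the combinatorial statement about walks in $T(\ma)$ into the algebraic language of the monodromy group acting on flag orbits. The two ingredients are: (a) by the very definition used in Lemma~\ref{k-face}, the elements of $\Mon(\ma)$ that move a flag $\Phi$ within its $k$-face $(\Phi)_k$ are exactly the words in $\langle s_i,s_j\rangle$ with $k\neq i,j$, and conversely every flag of $(\Phi)_k$ has the form $\Phi^w$ for such a $w$; and (b) as recorded before the lemma, $\Gamma(\ma)$ commutes with each $s_\ell$, so $\Mon(\ma)$ acts on $\Orb(\ma)$ by $\oo_\Phi\cdot w=\oo_{\Phi^w}$, and each $s_\ell$ is a well-defined permutation of $\Orb(\ma)$. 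From (b) one gets the (routine) lifting principle: a walk in $T(\ma)$ with successive edge/semi-edge colours $i_0,\dots,i_m$ lifts, once a starting flag $\Phi'\in\oo_\Phi$ is chosen, to the unique walk in $\gr$ from $\Phi'$ along those colours, ending at $(\Phi')^{w}$ with $w=s_{i_0}\cdots s_{i_m}$, and the projection of that endpoint to $T(\ma)$ is the endpoint of the original walk; conversely every walk in $\gr$ projects to a walk in $T(\ma)$ with the same colour sequence. In particular an $i$-$j$-walk in either graph corresponds to a word $w\in\langle s_i,s_j\rangle$.

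For the forward direction, suppose there is an $i$-$j$-walk in $T(\ma)$ from $\oo_\Phi$ to $\oo_\Psi$, and let $w\in\langle s_i,s_j\rangle$ be an associated word. Lifting from $\Phi$, the endpoint is $\Phi^w$, and its projection $\oo_{\Phi^w}$ equals $\oo_\Psi$, so $\Phi^w=\Psi\beta$ for some $\beta\in\Gamma(\ma)$. Since $w\in\langle s_i,s_j\rangle$ with $k\neq i,j$, we have $\Phi^w\in(\Phi)_k$, hence $(\Phi^w)_k=(\Phi)_k$ by Lemma~\ref{k-face}; and since $\beta$ commutes with the $s_\ell$, $(\Phi^w)_k=(\Psi\beta)_k=(\Psi)_k\beta$. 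Therefore $(\Phi)_k=(\Psi)_k\beta$, so $(\Phi)_k$ and $(\Psi)_k$ lie in the same $\Gamma(\ma)$-orbit of $k$-faces.

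For the converse, suppose $\alpha\in\Gamma(\ma)$ satisfies $(\Phi)_k\alpha=(\Psi)_k$. Then $\Phi\alpha\in(\Phi\alpha)_k=(\Phi)_k\alpha=(\Psi)_k$, so by the description of $(\Psi)_k$ there is $w\in\langle s_i,s_j\rangle$ with $\Phi\alpha=\Psi^w$. The walk in $\gr$ from $\Psi$ along the colour sequence of $w$ ends at $\Psi^w=\Phi\alpha$; projecting to $T(\ma)$ yields an $i$-$j$-walk from $\oo_\Psi$ to $\oo_{\Phi\alpha}=\oo_\Phi$, and reversing it gives the desired $i$-$j$-walk from $\oo_\Phi$ to $\oo_\Psi$. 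The only genuinely delicate point is the bookkeeping in the lifting principle — in particular that walks through semi-edges (loops occurring when $\Phi$ and $\Phi^{s_\ell}$ lie in the same orbit) lift correctly — but this is immediate from the well-definedness of the $\Mon(\ma)$-action on $\Orb(\ma)$; everything else is a direct translation through Lemma~\ref{k-face} and the commutation of automorphisms with the distinguished generators.
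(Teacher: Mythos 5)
Your proof is correct and follows essentially the same route as the paper's: in both directions you convert the $i$-$j$-walk into a word $w\in\langle s_i,s_j\rangle$, use Lemma~\ref{k-face} together with the fact that automorphisms commute with the distinguished generators, and the only cosmetic difference is that you place the automorphism on the other side (writing $\Phi^w=\Psi\beta$ rather than $\Phi^w\alpha=\Psi$) and run the converse walk from $\oo_\Psi$ back to $\oo_\Phi$ before reversing it. The extra care you take with the lifting of walks through semi-edges is a point the paper leaves implicit, but it does not change the argument.
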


\begin{proof}
Suppose there is a $i$-$j$-walk in $T(\ma)$ between the vertices $\oo_\Phi$ and $\oo_\Psi$, and let $w \in \langle s_i, s_j\rangle$ be the associated element of $\Mon(\ma)$ corresponding to such walk. Then $\Phi^w \in \oo_\Psi$; that is, there exists $\alpha \in \Gamma(\ma)$ such that $\Phi^w\alpha=\Psi$. Now, by definition, $(\Phi\alpha)^w\in (\Phi\alpha)_k$.
On the other hand, as the action of $\Mon(\ma)$ commutes with the action of $\Gamma(\ma)$, then $(\Phi\alpha)^w=(\Phi^w)\alpha \in (\Phi^w\alpha)_k=(\Phi^w)_k\alpha$. That is, $\Psi \in (\Phi\alpha)_k \cap (\Phi^w\alpha)_k$. By Lemma~\ref{k-face} we then have that $(\Phi\alpha)_k=(\Phi^w\alpha)_k=(\Phi^w)_k\alpha$. Hence, $(\Phi\alpha)_k$ and $(\Phi^w)_k$ are in the same orbit of $k$-faces under $\Gamma(\ma)$. Moreover, $(\Phi\alpha)_k=(\Phi)_k\alpha$ implies that $(\Phi\alpha)_k$ and $(\Phi)_k$ belong to the same orbit under $\Gamma(\ma)$. We therefore can conclude that $(\Phi)_k$ and $(\Psi)_k=(\Phi^w\alpha)_k$ are in the same orbit of $k$-faces under the action of $\Gamma(\ma)$.

For the converse, let $\alpha \in \Gamma(\ma)$ be such that $(\Phi)_k\alpha = (\Psi)_k$.
That is, $\{\Phi^w \mid w \in \langle s_i, s_j \rangle\}\alpha = \{\Psi^u \mid u \in \langle s_i, s_j \rangle\}$.
Hence, there exists $w \in \langle s_i,s_j \rangle$ such that $\Phi^w\alpha=\Psi$. Now, $$\oo_\Phi \cdot w = \oo_{\Phi\alpha}\cdot w = \oo_{(\Phi\alpha)^w}=\oo_{\Phi^w\alpha}=\oo_\Psi.$$ Therefore $w\in\langle s_i, s_j \rangle$ induces a $i$-$j$-walk in $T(\ma)$ starting at $\oo_\Phi$ and finishing at $\oo_\Psi$.
\end{proof}

The following theorem is an immediate consequence of the above Lemma.

\begin{theorem}
\label{faceorbits}
Let $\ma$ be a map with  symmetry type graph
$T(\ma)$. Then, the number of connected components in the 2-factor of
$T(\ma)$ of colours $i$ and $j$, with $i,j \in \{0,1,2\}$ and $i \neq j$, determine
 the number of orbits of $k$-faces of $\ma$, where $k \in \{0,1,2\}$ is such that $k \neq i,j$.
 \end{theorem}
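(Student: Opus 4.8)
The plan is to deduce the statement almost directly from the lemma immediately preceding it. First I would note that two vertices $\oo_\Phi$ and $\oo_\Psi$ of $T(\ma)$ lie in the same connected component of the $2$-factor of colours $i$ and $j$ if and only if there is an $i$-$j$-walk in $T(\ma)$ joining them: this is just the definition of "connected component" applied to the subgraph spanned by the edges (and semi-edges) of colours $i$ and $j$, using that each colour class is a perfect matching and hence this subgraph is a genuine $2$-factor whose components are exactly the $i$-$j$-walk classes. By the preceding lemma, such an $i$-$j$-walk between $\oo_\Phi$ and $\oo_\Psi$ exists precisely when $(\Phi)_k$ and $(\Psi)_k$ (with $k\neq i,j$) lie in the same orbit of $k$-faces under $\Gamma(\ma)$. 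Composing these two equivalences, membership of $\oo_\Phi$ and $\oo_\Psi$ in the same component of the $2$-factor is equivalent to the corresponding $k$-faces being $\Gamma(\ma)$-equivalent.

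Next I would turn this equivalence into a bijection. Define a map from the set of components of the $i$-$j$-$2$-factor of $T(\ma)$ to the set of $\Gamma(\ma)$-orbits of $k$-faces of $\ma$ by sending a component $C$ to the orbit of $(\Phi)_k$, where $\oo_\Phi$ is any vertex lying in $C$ and $\Phi$ is any flag in that orbit. Independence of the choice of $\Phi$ inside $\oo_\Phi$ uses the identity $(\Phi\alpha)_k=(\Phi)_k\alpha$ recorded after Lemma~\ref{k-face}; independence of the choice of $\oo_\Phi$ inside $C$, and injectivity, both follow from the equivalence established above; and surjectivity follows because, as explained after Lemma~\ref{k-face}, every $k$-face of $\ma$ has the form $(\Phi)_k$ for some flag $\Phi$, which belongs to some orbit $\oo_\Phi$, which belongs to some component of the $2$-factor. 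Hence the two finite sets are in bijection, so their cardinalities coincide, which is exactly the assertion of the theorem.

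The only step requiring any care — and the one I would treat as the "main obstacle", modest as it is — is the bookkeeping in the bijection: verifying that the correspondence is well defined independently of representatives, and that semi-edges of $T(\ma)$ (which occur when $i$-adjacent flags lie in a common orbit) are counted as part of the component at their vertex rather than as a separate component. The latter is immediate once one observes that a semi-edge of colour $i$ at $\oo_\Phi$ merely records the trivial $i$-$j$-walk that remains at $\oo_\Phi$, so it adds nothing to the component structure; the former is handled by the equivalence together with the cited identity. With these routine checks in place, the theorem is indeed an immediate consequence of the lemma.
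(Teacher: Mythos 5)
Your argument is correct and is exactly the route the paper takes: the paper states the theorem as an immediate consequence of the preceding lemma, and your write-up simply fills in the routine bookkeeping (components of the $i$-$j$ $2$-factor correspond to $i$-$j$-walk classes, which the lemma identifies with $\Gamma(\ma)$-orbits of $k$-faces). Nothing is missing.
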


In particular, it follows from the above theorem, that an edge-transitive map $\ma$ has a symmetry type graph $T(\ma)$ with only one connected component of the 2-factor of colours $0$ and $2$. Since in the flag graph $\gr$ the edges of $\ma$ are given by 4-cycles, and quotient of a 4-cycle may have 1, 2, or 4 vertices (see Figure~\ref{edges}) it is immediate to see that an edge transitive map is  a 1-, 2-, or 4-orbit map (see \cite{GraverWatkins}).

In fact, there are 14  types of edge-transitive maps. These types were first studied by Graver and Watkins, in 1997 \cite{GraverWatkins}. After that, in 2001, Tucker, Watkins, and \v{S}ir\'a\v{n}, found that there exists a map for each type, \cite{EdgeT}.

The classification of symmetry type graphs of 3-orbit maps (see Figure~\ref{threeorbit}), together with Theorem~\ref{faceorbits} imply the following result.

\begin{corollary}
Every 3-orbit map has exactly two orbits of edges.
\end{corollary}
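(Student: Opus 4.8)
The plan is to combine the enumeration of the three symmetry type graphs of $3$-orbit maps (Figure~\ref{threeorbit}) with Theorem~\ref{faceorbits}, which says that the number of connected components of the $0$-$2$ coloured $2$-factor of $T(\ma)$ equals the number of orbits of edges of $\ma$. So the corollary reduces to a finite check: for each of the three pregraphs on three vertices listed in Figure~\ref{threeorbit}, count the connected components of the subgraph spanned by the edges and semi-edges of colours $0$ and $2$, and verify the answer is always $2$.

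First I would recall the structural constraints that force this. In $T(\ma)$ every vertex has exactly one edge or semi-edge of each colour $0,1,2$, and the $0$-$2$ part of $T(\ma)$ must be a disjoint union of the five admissible quotient fragments of a $0$-$2$ coloured $4$-cycle shown in Figure~\ref{edges}; each such fragment has $1$, $2$, or $4$ vertices. On three vertices, the only way to cover all three vertices by such fragments is $2+1$ (one two-vertex fragment plus one one-vertex fragment) — a single three-vertex fragment does not occur in Figure~\ref{edges}, and $1+1+1$ would use three one-vertex fragments but then there is no room left, while in fact any partition of $3$ into parts from $\{1,2,4\}$ other than $2+1$ forces a single component only via the impossible ``$3$'', so $2+1$ is the unique possibility. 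Hence the $0$-$2$ $2$-factor of any $3$-orbit symmetry type graph has exactly two connected components, and Theorem~\ref{faceorbits} (with $i=0$, $j=2$, $k=1$ replaced by the edge case: $i,j$ the edge-colours $0,2$ so that $k$ indexes edges) gives exactly two orbits of edges. I would then, for completeness, point at Figure~\ref{threeorbit} and note that each of the three displayed graphs indeed realizes the $2+1$ split, confirming the count case by case.

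The only subtlety — and the step I expect to need the most care — is making the counting argument airtight rather than hand-wavy: one must argue that no three-vertex connected $0$-$2$ fragment arises. This follows because in $\gr$ the edges are genuine $4$-cycles alternating colours $0,2$, and any quotient of such a $4$-cycle by a group action identifies vertices in an orbit, so the quotient fragment is determined by a subgroup of the dihedral/cyclic symmetries of a $4$-cycle; enumerating these gives precisely the five fragments of Figure~\ref{edges}, with vertex counts $1,2,2,4,4$ — never $3$. Once that is granted, the partition of $3$ into admissible parts is forced to be $2+1$, and the corollary is immediate. I would keep the write-up short, leaning on Theorem~\ref{faceorbits} and Figure~\ref{threeorbit} and spelling out only the ``no part equal to $3$'' observation.
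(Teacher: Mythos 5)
Your proposal is correct and follows essentially the same route as the paper, which simply combines the enumeration of the three $3$-orbit symmetry type graphs in Figure~\ref{threeorbit} with Theorem~\ref{faceorbits} and reads off that each has exactly two components in its $0$--$2$ $2$-factor. One caveat on your attempted a priori argument: the partition $1+1+1$ is \emph{not} excluded because ``there is no room left'' --- three one-vertex fragments of the type in Figure~\ref{edges} are perfectly consistent with the local structure; what rules this out is that the remaining colour-$1$ edges would then have to connect all three vertices, which is impossible since each vertex carries exactly one edge or semi-edge of colour $1$, contradicting the connectivity of $T(\ma)$. Since you also fall back on the explicit case-by-case check against Figure~\ref{threeorbit}, the proof as a whole is sound.
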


As pointed out in \cite{k-orbitM}, there are 22 types of 4-orbit maps. The 7 edge-transitive ones are shown in Figure~\ref{fourOrbitEdgeTrans}, while the 15 that are not edge-transitive are depicted in Figure \ref{fourOrbitNOEdgeTrans}.

\begin{figure}[htbp]
\begin{center}
\vspace{-2cm}
\includegraphics[width=12.5cm]{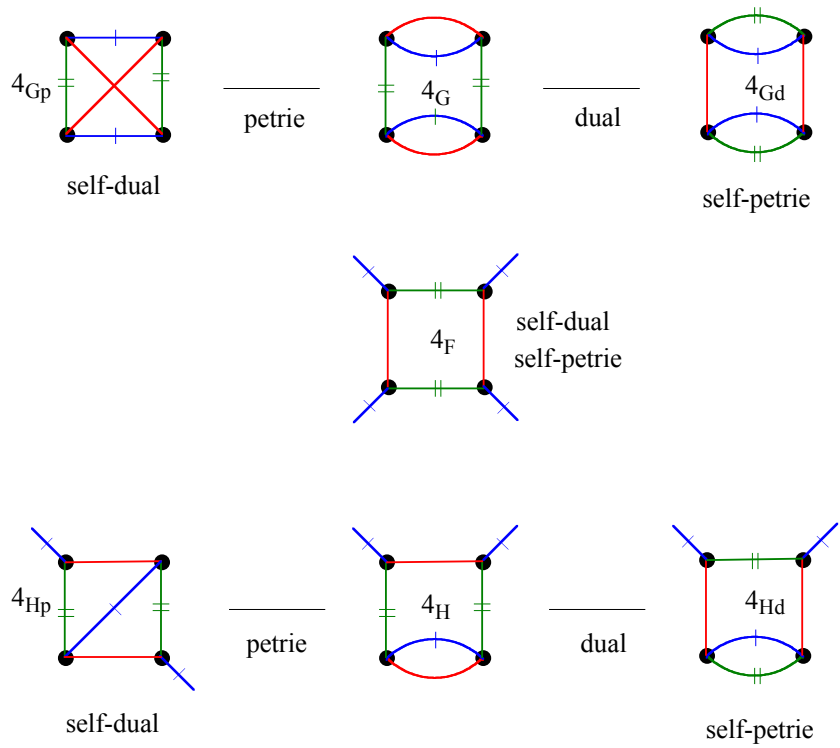}\vspace{-3cm}
\caption{The seven symmetry type graphs of edge-transitive $4$-orbit maps.}
\label{fourOrbitEdgeTrans}
\end{center}
\end{figure}

\begin{figure}[htbp]
\begin{center}
\includegraphics[width=12.5cm]{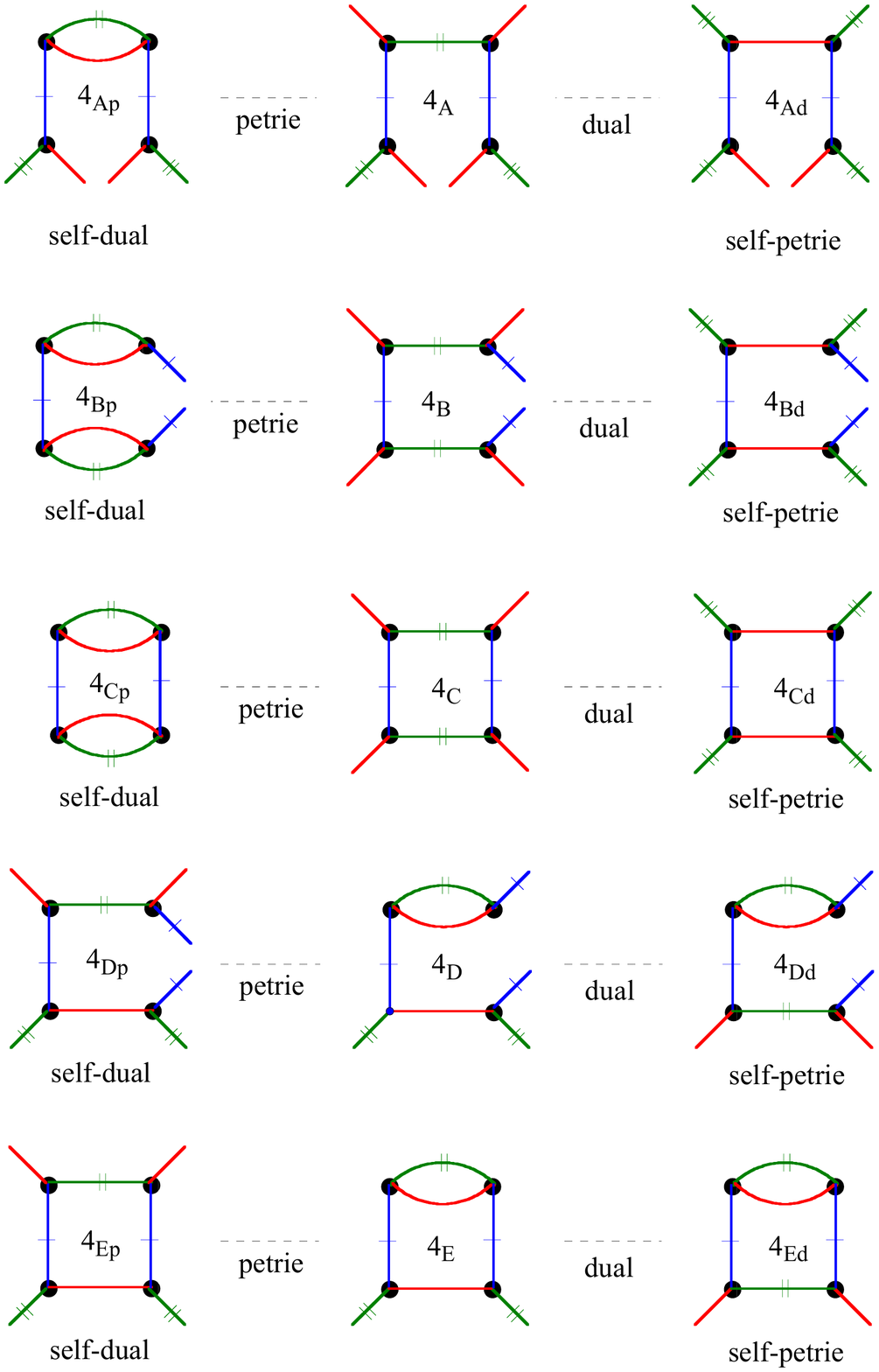}
\caption{The fifteen symmetry type graphs of $4$-orbit maps that are not edge-transitive.}
\label{fourOrbitNOEdgeTrans}
\end{center}
\end{figure}

Using the twenty two symmetry type graphs of 4-orbit maps, and the structure of the 2-factors of colours 0 and 2, one can see that there are thirteen different types of 5-orbit maps. Their symmetry type graphs are shown in Figure~\ref{fiveorbit}.

\begin{figure}[htbp]
\begin{center}
\vspace{-5cm}
\includegraphics[width=12.5cm]{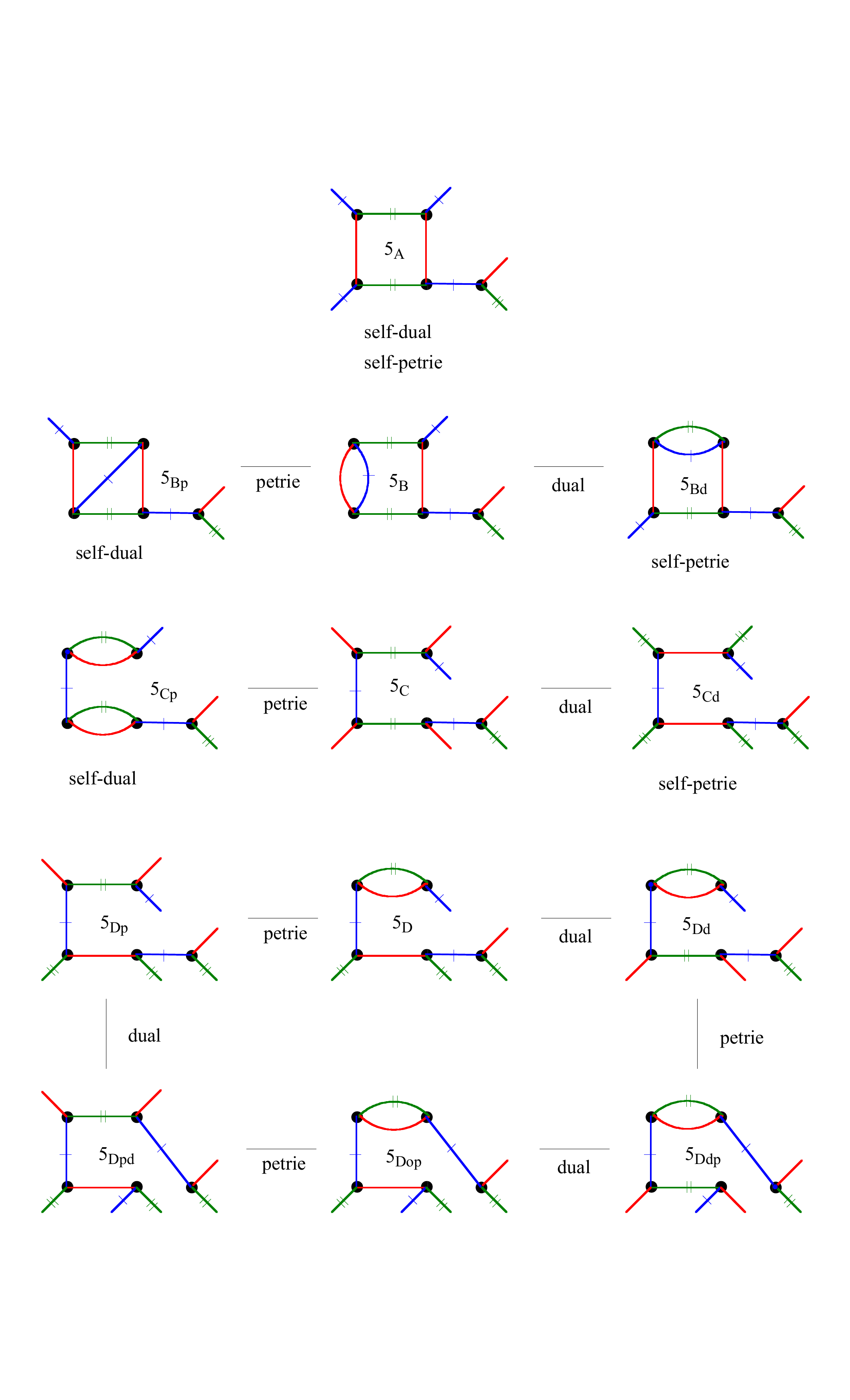}
\vspace{-2cm}
\caption{The thirteen symmetry type graphs of 5-orbit maps.}
\label{fiveorbit}
\end{center}
\end{figure}

\subsection{Dual and Petrie dual symmetry type graphs}
\label{duality}

Let $\delta$ be a duality of a map $\ma$ to its dual map $\ma^*$. Recall that $\delta$ can be regarded as a bijection between the vertices of $\gr$ and the vertices of $\mathcal{G_{\ma^*}}$ that sends edges of colour $i$ of $\gr$ to edges of colour $2-i$ of $\mathcal{G_{\ma^*}}$, for each $i \in \{0,1,2\}$. Then, the {\em dual type} of a symmetry type graph is simply a symmetry type graph with the same vertices and edges as the original, but with a permutation of the colours of its edges and semiedges in such a way that each colour $i \in \{0,1,2\}$ is replaced by the colour $2-i$. The following proposition is hence straightforward.

\begin{proposition}
If a map $\ma$ has symmetry type graph $T(\ma)$ then its dual $\ma^*$ has the dual of $T(\ma)$ as a symmetry type graph.
\end{proposition}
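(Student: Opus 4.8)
The plan is to track how a duality $\delta\colon\ma\to\ma^{*}$ interacts with the two pieces of data that define a symmetry type graph: the partition of flags into $\Gamma$-orbits, and the coloured adjacencies. First I would show that $\delta$ conjugates $\Gamma(\ma)$ onto $\Gamma(\ma^{*})$. For $\gamma\in\Gamma(\ma)$ define a permutation $\gamma'$ of $\fl(\ma^{*})$ by $\Psi\gamma':=\bigl((\Psi\delta^{-1})\gamma\bigr)\delta$ (a composition of bijections, hence a bijection). Using that $\delta$ and $\delta^{-1}$ are dualities, so each exchanges $i$-adjacency with $(2-i)$-adjacency, together with the fact that $\gamma$ commutes with all three adjacencies, one gets in one line, for every $i\in\{0,1,2\}$ and every $\Psi\in\fl(\ma^{*})$, that $\Psi^{i}\gamma'=\bigl((\Psi\delta^{-1})^{2-i}\gamma\bigr)\delta=\bigl(((\Psi\delta^{-1})\gamma)^{2-i}\bigr)\delta=(\Psi\gamma')^{i}$, so $\gamma'\in\Gamma(\ma^{*})$. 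Running the same construction on the duality $\delta^{-1}\colon\ma^{*}\to\ma$ gives the reverse inclusion, whence $\Gamma(\ma^{*})=\delta^{-1}\Gamma(\ma)\delta$.

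From this, $\delta$ carries $\Gamma(\ma)$-orbits to $\Gamma(\ma^{*})$-orbits: for $\Psi\in\fl(\ma^{*})$ the orbit of $\Psi$ under $\Gamma(\ma^{*})$ is exactly $(\oo_{\Psi\delta^{-1}})\delta$, the image under $\delta$ of the $\Gamma(\ma)$-orbit of $\Psi\delta^{-1}$. Hence $\oo_{\Phi}\mapsto\oo_{\Phi\delta}$ is a well-defined bijection $\Orb(\ma)\to\Orb(\ma^{*})$; in particular $\ma$ and $\ma^{*}$ have the same number of flag orbits, so $T(\ma)$ and $T(\ma^{*})$ have the same vertex set under this bijection.

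Next I would match the (semi)edges. By the definition of $T(\ma)$, there is an edge of colour $i$ between $\oo_{\Phi}$ and $\oo_{\Psi}$ iff there are representatives $\Phi'\in\oo_{\Phi}$ and $\Psi'\in\oo_{\Psi}$ with $\Psi'=(\Phi')^{i}$ in $\ma$. Applying $\delta$ and using $(\Phi')^{i}\delta=(\Phi'\delta)^{2-i}$, this holds iff $\Phi'\delta\in\oo_{\Phi\delta}$ and $\Psi'\delta\in\oo_{\Psi\delta}$ are $(2-i)$-adjacent in $\ma^{*}$, i.e. iff there is an edge of colour $2-i$ between $\oo_{\Phi\delta}$ and $\oo_{\Psi\delta}$ in $T(\ma^{*})$; the identical equivalence covers the case $\oo_{\Phi}=\oo_{\Psi}$, turning a colour-$i$ semiedge of $T(\ma)$ into a colour-$(2-i)$ semiedge of $T(\ma^{*})$. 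Therefore, under the vertex bijection above, $T(\ma^{*})$ is $T(\ma)$ with each colour $i$ replaced by $2-i$, which is precisely the dual of $T(\ma)$.

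There is no genuinely hard step here — this is why the statement is labelled straightforward. The only points requiring care are the bookkeeping of the right actions (of $\Mon$ and of $\Gamma$, and of $\delta$ as a bijection of flags) and the observation that symmetry type graphs are only defined up to colour-preserving isomorphism, so the conclusion is correctly phrased as "$\ma^{*}$ has the dual of $T(\ma)$ as a symmetry type graph", the witnessing isomorphism being the one induced by $\delta$.
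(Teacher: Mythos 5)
Your proof is correct and follows exactly the route the paper has in mind: the paper declares the proposition ``straightforward'' immediately after observing that a duality is a bijection of flag graphs sending colour $i$ to colour $2-i$, and your argument (conjugating $\Gamma(\ma)$ to $\Gamma(\ma^*)$ by $\delta$, hence matching orbits, then transporting the coloured adjacencies) is precisely the verification being left to the reader. No discrepancy to report.
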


A symmetry type graph is said to be {\em self-dual} if it is isomorphic to its dual type.
Therefore, the symmetry type graph of a self-dual map is a {\em self-dual symmetry type graph}. However, the converse is not true. Not every map with a self-dual type is a self-dual map, for example, the cube and the octahedron are duals to each other (hence, they are not self-dual) and have the same symmetry type graph (as they are regular maps).

Note that by Lemma \ref{dualityaction}, each duality $\delta$ of a map $\ma$ induces a permutation $d$ of the vertices of $T(\ma)$, such that the edge colours 0 and 2 are interchanged.
In particular, the symmetry type graph of a properly self-dual map has a duality that fixes each of its vertices, while the symmetry type graph of an improperly self-dual map has a duality that moves at least two of its vertices.
Even more, since $\delta^2$ is an automorphism of $\ma$, then $\delta^2$ fixes each orbit of $\ma$.
Hence $d^2$ acts as the identity on the vertices of $T(\ma)$.
Therefore, for any duality $\delta$ of a self-dual map $\ma$ the corresponding duality $d$ of its symmetry type graph $T(\ma)$ is a polarity; i.e. a duality of order two.
However, in a similar way as before, the symmetry type graph does not posses all the information of the map.
That is, given a self-dual map $\ma$, its symmetry type graph $T(\ma)$ might not give us enough information on whether $\ma$ is properly or improperly self-dual.
An example of this is that chiral maps can be either properly or improperly self-dual (see \cite{isasia}), and hence the symmetry type graph of a chiral map accepts dualities that fix both vertices as well as dualities that interchange them.

Given a self-dual symmetry type graph $T(\ma)$ of a self-dual map $\ma$, the above paragraph incite us to add one edge (or semi-edge) of colour $D$ to each vertex of $T(\ma)$, representing the action of the dualities of $\ma$ on the flag orbits. The new pre-graph shall be called the {\em extended symmetry type graph} of the self-dual map $\ma$ and denoted by $\overline{T(\ma)}$. Since a self-dual regular map is always properly self-dual, then the extended symmetry type graph of a self-dual regular map consists of a vertex and four semi-edges, of colours $0,1,2$ and $D$, respectively.
Hence, as the distinguished generators $s_0, s_1, s_2$ of $\Mon(\ma)$ label the edges of $T(\ma)$, the edges of $\overline{T(\ma)}$ are labeled by $s_0, s_1, s_2$ and $d$.

Since for every flag $\Phi$ of a self-dual map $\ma$ and any duality $\delta$ of $\ma$ we have that $\Phi^1\delta=(\Phi\delta)^1$, the two factors of colours 1 and $D$ of $\overline{T(\ma)}$ are a factor of a 4-cycle. Furthermore, since $(\Phi\delta)^0\delta=(\Phi\delta^2)^{2}$, and $\delta^2 \in \Gamma(\ma)$, then  the path of $\overline{T(\ma)}$ coloured $D, 0, D$ starting at a given vertex $\oo_\Phi$, ends at $\oo_{\Phi^2}$;
that is, any path of colours $D,2,D,0$  finishes at the same vertex of $\overline{T(\ma)}$ that started.

We make here the remark that not every self-dual symmetry type accepts proper dualities, and that some symmetry types might accept more than one, essentially different, duality.
Every 2-orbit self-dual symmetry type admits both, a proper self-duality and an improperly self-duality. However, this is not always the case, for example, the only self-dual type of 3-orbit maps only admits a proper self-duality. Whenever an extended symmetry type graph has a properly self-duality, the colour $D$ of the graph consists of one semi-edge per vertex. In fact, we have the following proposition.

\begin{proposition}
Let $\ma$ be a self-dual map and let $T(\ma)$ be its symmetry type graph.
\begin{enumerate}
\item[a)]
If $T(\ma)$ has a connected component in its 2-factor of colours $0$ and $2$ that has exactly 4 vertices, then $\ma$ is improperly self-dual.
\item[b)]
If $T(\ma)$ has a connected component in its 2-factor of colours $0$ and $2$ that has exactly 2 vertices, one edge and 2 semi-edges, then $\ma$ is improperly self-dual.
\end{enumerate}
\end{proposition}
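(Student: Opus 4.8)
I plan to prove the contrapositive: assuming $\ma$ is properly self-dual, I will derive a contradiction in each of the two situations. Recall that $\ma$ is properly self-dual precisely when every duality fixes all flag orbits, which — by Lemma~\ref{dualityaction} together with the discussion of the extended symmetry type graph — amounts to saying that the polarity $d$ induced on $T(\ma)$ by a duality $\delta$ of $\ma$ acts as the identity on the vertex set of $T(\ma)$. So the whole argument rests on a single elementary observation about how such a $d$ acts on half‑edges.

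That observation is as follows. Since $d$ carries every (semi-)edge of colour $i$ joining $\oo_\Phi$ and $\oo_\Psi$ to a (semi-)edge of colour $2-i$ joining $d(\oo_\Phi)$ and $d(\oo_\Psi)$, and since here $d$ fixes every vertex, at each vertex $v$ of $T(\ma)$ whatever is attached to $v$ in colour $0$ must be mirrored by something attached to $v$ in colour $2$ with the same other endpoint. Reading a colour-$i$ semi-edge at $v$ as ``the colour-$i$ neighbour of $v$ is $v$ itself'', this says that in the $0$-$2$ $2$-factor of $T(\ma)$ the colour-$0$ neighbour and the colour-$2$ neighbour of each vertex coincide. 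Because that $2$-factor is $2$-regular (one colour-$0$ and one colour-$2$ half-edge at each vertex), I then only need to inspect the five possible components, coming from the quotients of a $0$-$2$ coloured $4$-cycle in Figure~\ref{edges}, and see which of them satisfy this coincidence: only the single vertex carrying a $0$-semi-edge and a $2$-semi-edge, and the two vertices joined by one $0$-edge and one $2$-edge, do.

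The conclusion is then immediate. For part a): a component on exactly $4$ vertices is a genuine $4$-cycle $v_0\xrightarrow{0}v_1\xrightarrow{2}v_2\xrightarrow{0}v_3\xrightarrow{2}v_0$ with all four vertices distinct, so the colour-$0$ neighbour $v_1$ of $v_0$ differs from its colour-$2$ neighbour $v_3$, contradicting the observation; hence $d$ moves a vertex and $\ma$ is improperly self-dual. For part b): a component with two vertices, one edge and two semi-edges is, after possibly interchanging the names of the colours $0$ and $2$, a $0$-edge $uw$ together with a $2$-semi-edge at each of $u$ and $w$; then the colour-$0$ neighbour of $u$ is $w$ while its colour-$2$ ``neighbour'' is $u\neq w$, again contradicting the observation, so $\ma$ is improperly self-dual. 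I do not expect a real obstacle here; the only points requiring care are the bookkeeping fact — essentially a definition, made precise through Lemma~\ref{dualityaction} — that ``properly self-dual'' translates exactly into ``$d$ fixes every vertex of $T(\ma)$'', and the convention identifying semi-edges with self-neighbours when one enumerates the admissible components of the $0$-$2$ $2$-factor from Figure~\ref{edges}.
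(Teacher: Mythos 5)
Your proposal is correct and follows essentially the same route as the paper: both argue by contradiction from proper self-duality forcing the induced polarity to fix every vertex of $T(\ma)$, which makes the colour-$0$ and colour-$2$ neighbours of each vertex coincide (the paper phrases this via the $D,2,D,0$ path identity in $\overline{T(\ma)}$ with $D$ a semi-edge at each vertex), and then both rule out the $4$-vertex and the one-edge-two-semi-edge components. Your explicit treatment of part b), which the paper leaves as ``similar,'' is also correct.
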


\begin{proof}
For $a)$, let $v_1, \dots, v_4$ the four vertices of a connected component in the 2-factor of $T(\ma)$ of colours 0 and 2. Without loss of generality let us assume that $\{v_1, v_2\}$ and $\{v_3, v_4\}$ are $0$-edges of $T(\ma)$, while $\{v_1, v_4\}$ and $\{v_2, v_3\}$ are $2$-edges of $T(\ma)$.  If $\ma$ is a properly self-dual map, then the colour $D$ of the extended graph $\overline{T(\ma)}$ consists of one semi-edge per vertex. Hence, the path $D,2,D,0$ takes the vertex $v_1$ to the vertex $v_3$, contradicting the fact that every $D,2,D,0$ starts and finishes at the same vertex. Therefore $\ma$ is improperly self-dual.
Part $b)$ follows in a similar way.
\end{proof}

The above proposition implies that if a map $\ma$ is properly self-dual, then the connected components in the 2-factor of $T(\ma)$ of colours 0 and 2 either have one vertex or have two vertices and a double edge between them.
Hence, up to five orbits, the types that admit properly self-dualities are types 1, 2, $2_1$, $2_{02}$, $3^{02}$, $4_{Ap}$, $4_{B_p}$, $4_{C_p}$ and $5_{C_p}$ (see Figures~\ref{twoorbit}, \ref{threeorbit}, \ref{fourOrbitNOEdgeTrans} and \ref{fiveorbit}).
Figure~\ref{PropTyp67} shows all self-dual symmetry type graphs with six and seven vertices that admit properly self-dualities.

\begin{figure}[htbp]
\begin{center}
\vspace{-2cm}
\includegraphics[width=12.5cm]{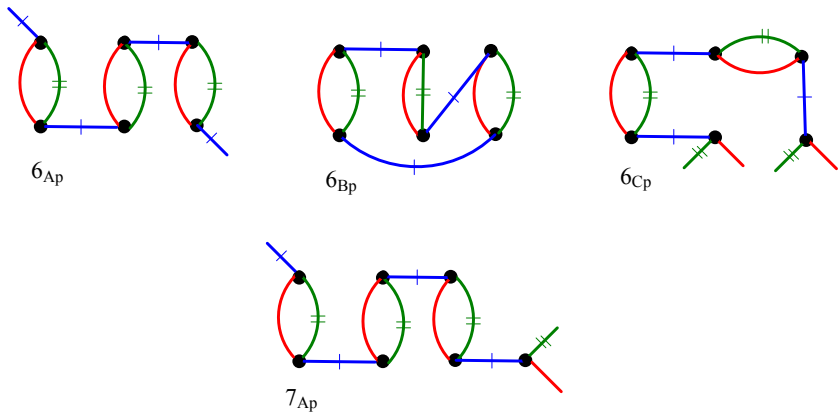}
\vspace{-2cm}
\caption{Symmetry type graphs with 6 and 7 orbits that admit proper self-dualitites.}
\label{PropTyp67}
\end{center}
\end{figure}

It should be now straightforward to see that the following corollary holds.

\begin{corollary}
If $k$ is even, there are exactly three extended symmetry type graphs with $k$ vertices admitting a proper self-duality. If $k$ is odd, there is exactly one extended symmetry type graph with $k$ vertices admitting a proper self-duality.
\end{corollary}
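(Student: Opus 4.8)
The plan is to peel off the colour $D$ (which is forced to be a family of semi-edges), observe that this collapses colours $0$ and $2$ to a single involution of the vertex set, and then note that what remains — the subgraph on colours $0$ and $1$ — is a connected $2$-valent coloured pregraph, hence a path or a cycle; the count then becomes a parity count on paths and cycles with $k$ vertices.

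First I would set up the normal form. Let $\overline{T}$ be the extended symmetry type graph of a properly self-dual map $\ma$ on $k$ flag-orbits. As recalled just above, colour $D$ then consists of exactly one semi-edge at each of the $k$ vertices, and by the preceding proposition the components of the $0$–$2$ factor are either a single vertex or a pair of vertices joined by a double edge; equivalently, $s_0$ and $s_2$ act as the same involution of the vertex set $V$. Let $H$ be the pregraph on $V$ formed by the edges and semi-edges of colours $0$ and $1$. Every vertex of $H$ meets exactly one edge-or-semi-edge of colour $0$ and exactly one of colour $1$, so $H$ is a disjoint union of paths and cycles; moreover $H$ is connected, because $\langle s_0,s_1\rangle$ and $\Mon(\ma)=\langle s_0,s_1,s_2\rangle$ act the same way on $V$ (as $s_2$ agrees with $s_0$ on $V$) and the latter is transitive on $V=\Orb(\ma)$. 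Hence $H$ is a single path or a single cycle, and $\overline{T}$ is reconstructed from $H$ by doubling each colour-$0$ edge/semi-edge into a parallel pair of colours $0$ and $2$ and reattaching a $D$-semi-edge at every vertex. So it suffices to classify, up to colour-preserving isomorphism, the connected $\{0,1\}$-coloured pregraphs on $k$ vertices in which every vertex meets one edge/semi-edge of each colour.

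Then I would do the enumeration. If $H$ is a cycle, its edges alternate between colours $0$ and $1$, so $k$ is even and there is a unique such pregraph. If $H$ is a path $w_1-w_2-\cdots-w_k$, the colours of successive edges alternate (valency condition at the interior vertices) and each endpoint carries a semi-edge of the colour not used by its incident edge; a short parity check on the alternation shows that the two end semi-edges have the same colour when $k$ is even and opposite colours when $k$ is odd. Thus for $k$ even the path gives two pregraphs (both end semi-edges of colour $0$, or both of colour $1$) and for $k$ odd it gives exactly one. These are pairwise non-isomorphic, since a colour-preserving isomorphism cannot interchange colour $1$ with colours $0$ or $2$ and the pregraphs are distinguished by the numbers of colour-$0$ and colour-$1$ semi-edges; reattaching $D$-semi-edges and doubling colour $0$ changes none of this. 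This yields at most three extended symmetry type graphs with $k$ vertices admitting a proper self-duality when $k$ is even, and at most one when $k$ is odd.

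Finally, for the matching lower bound I would exhibit a properly self-dual map realizing each of these pregraphs; for $k\le 7$ this is exactly what Figures~\ref{twoorbit}, \ref{threeorbit}, \ref{fourOrbitNOEdgeTrans}, \ref{fiveorbit} and~\ref{PropTyp67} display, and the general case is analogous. Together with the upper bound this gives exactly three for $k$ even and exactly one for $k$ odd, as claimed. The only step that needs genuine care is the colour bookkeeping in the parity count for paths (together with, if one wants full rigour, the realizability remark); everything else is an immediate consequence of the preceding proposition and the discussion of the colour $D$.
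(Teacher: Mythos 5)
Your argument is correct and is essentially the completion of the route the paper itself takes: the paper derives from the preceding proposition that a proper self-duality forces the $0$--$2$ components to be single vertices or double edges (so that colours $0$ and $2$ induce the same involution on the vertices), adds a $D$-semi-edge at every vertex, and then asserts the count as ``straightforward''; your reduction to the connected $2$-valent $\{0,1\}$-coloured pregraph $H$ and the path/cycle parity count is the right way to make that assertion precise, and it matches the lists in Figures~\ref{twoorbit}--\ref{fiveorbit} and~\ref{PropTyp67}. (Your final realizability step is not actually needed for the corollary as stated, since it counts extended symmetry type graphs as combinatorial objects rather than those realized by maps.)
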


The number of extended symmetry type graphs having improper self-dualities is more convoluted. Figures~\ref{ExtImp234}, \ref{ExtImp56} and \ref{ExtImp7} shows the possible extended symmetry type graphs with at most seven orbits, having improperly self-dualitites.

\begin{figure}[htbp]
\begin{center}
\includegraphics[width=12.5cm]{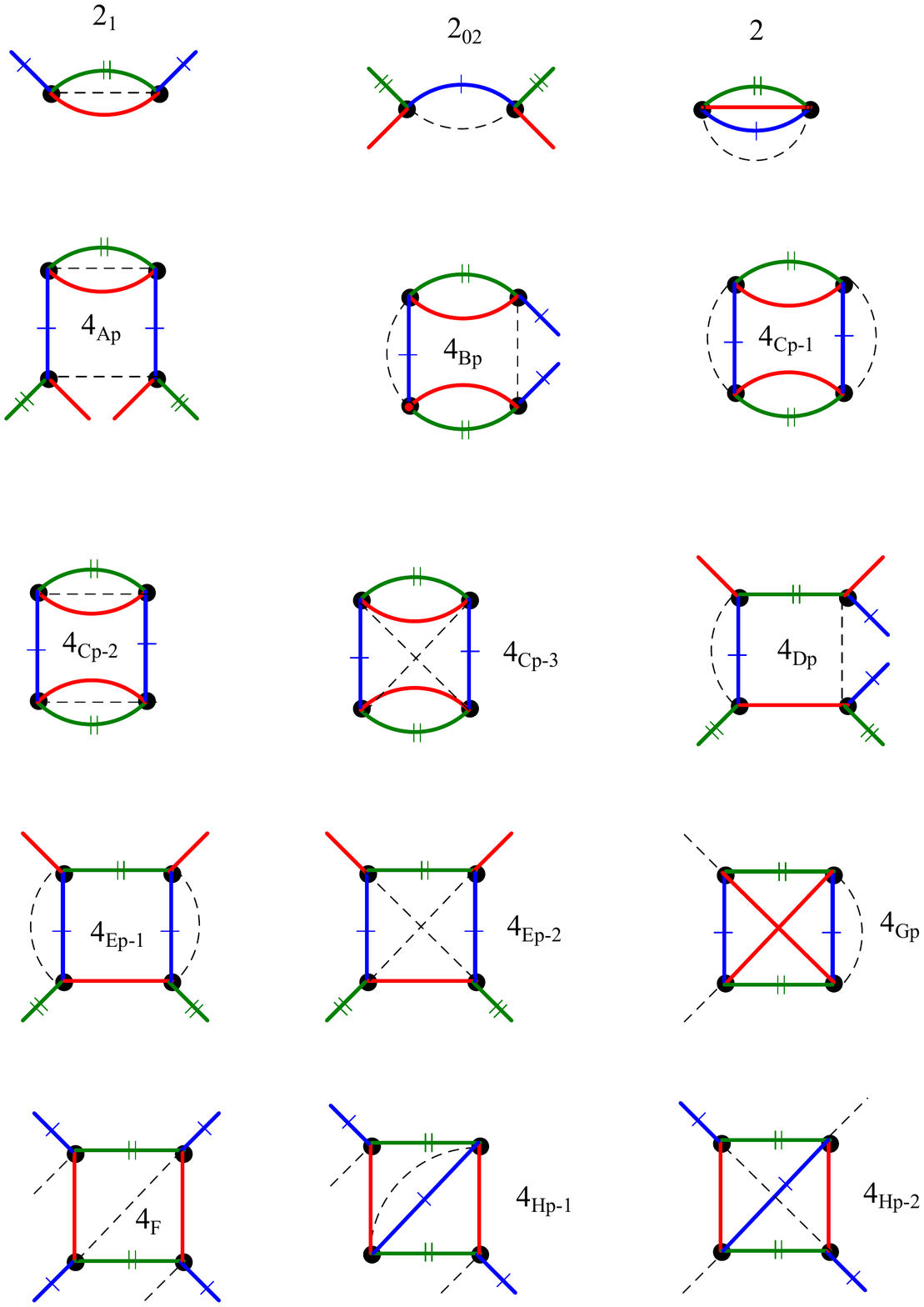}
\caption{Extended symmetry type graphs with at most 4 orbits, having improper self-dualitites.}
\label{ExtImp234}
\end{center}
\end{figure}

\begin{figure}[htbp]
\begin{center}
\includegraphics[width=12.5cm]{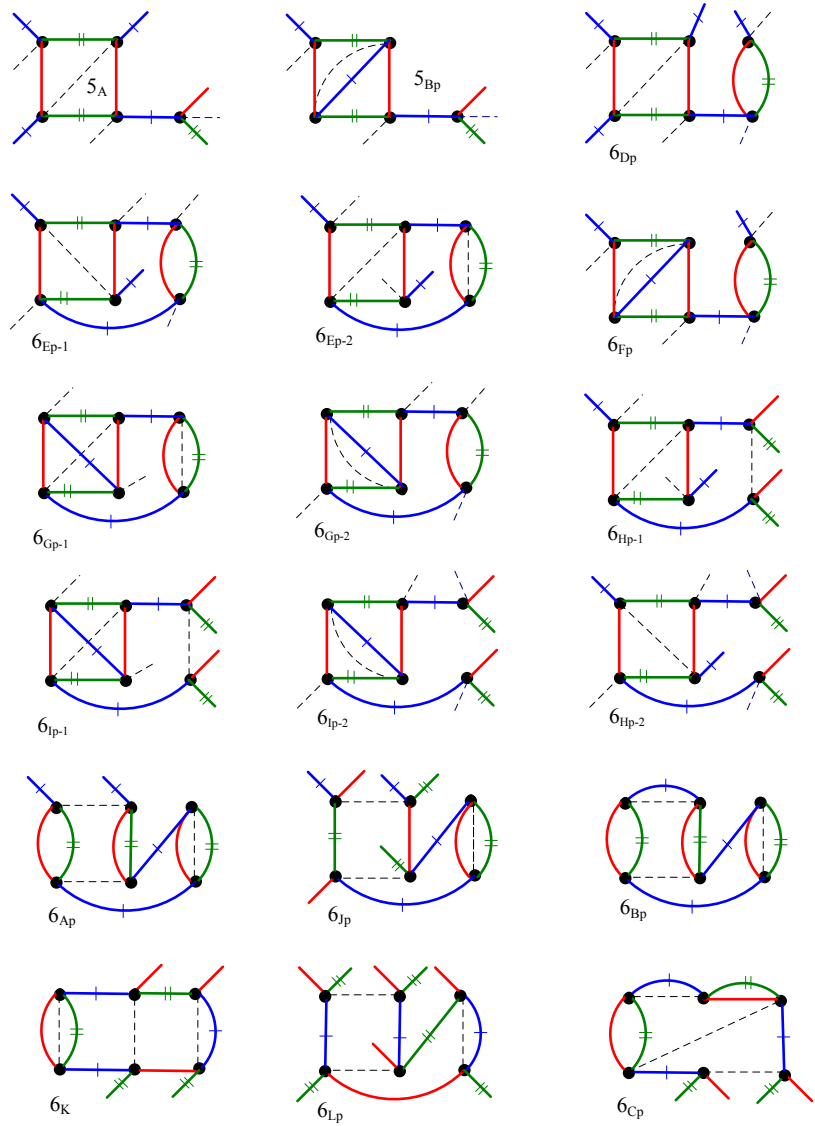}
\caption{Extended symmetry type graphs with 5 and 6 orbits, having improper self-dualitites.}
\label{ExtImp56}
\end{center}
\end{figure}

\begin{figure}[htbp]
\begin{center}
\includegraphics[width=12.5cm]{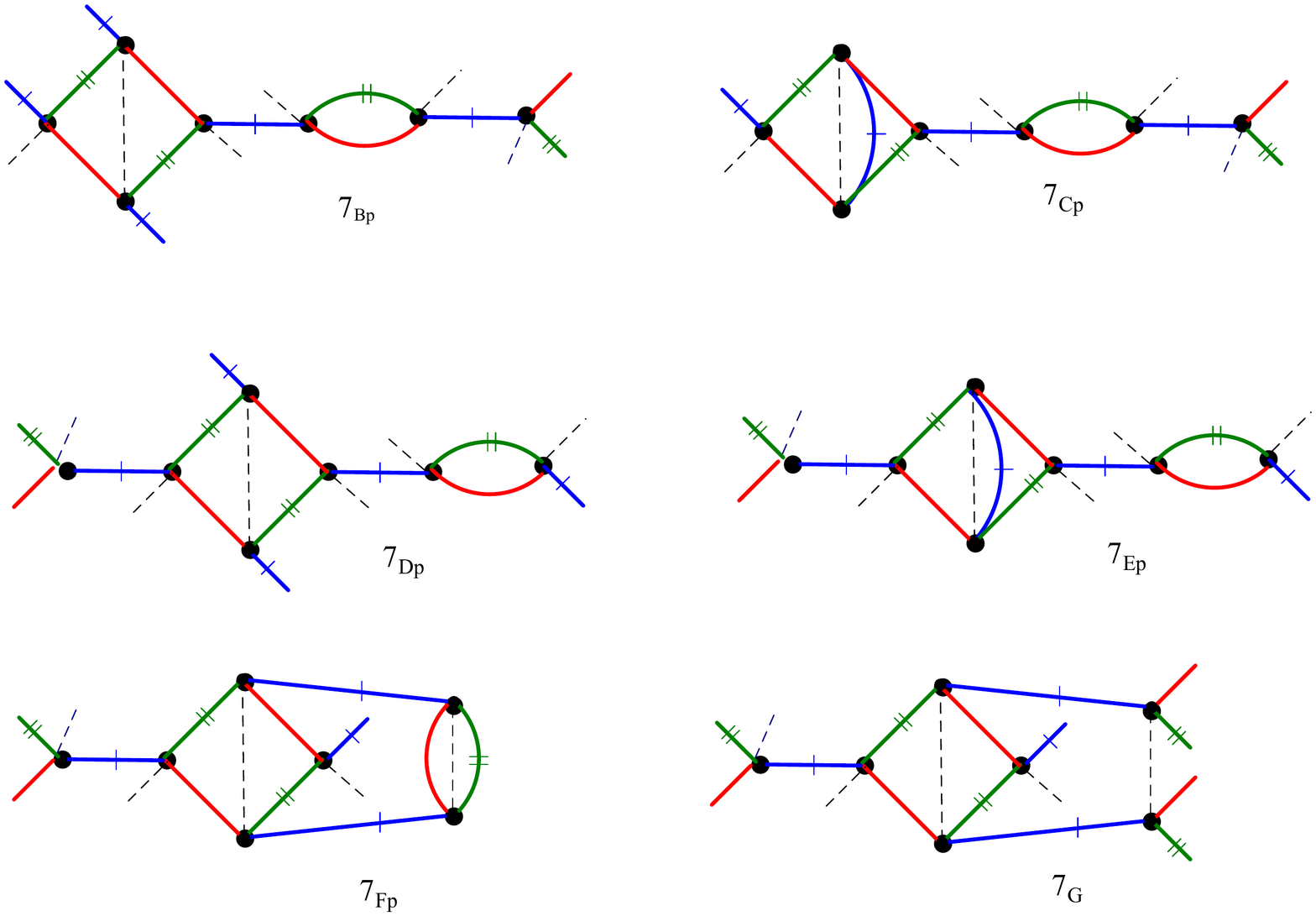}\vspace{-0.5cm}
\caption{Extended symmetry type graphs with 7 orbits, having improper self-dualitites.}
\label{ExtImp7}
\end{center}
\end{figure}

To complete this section, recall the bijection $\pi$ between the vertices of $\gr$ and the vertices of $\mathcal{G}_{\ma^P}$ induced by the Petrie dual of the map $\ma$,  preserves colours 1 and 2 and interchanges each (0,2)-path by colour 0 on the edges of the flag graph $\gr$. Then, similarly to the dual type, the {\em Petrie type} of a symmetry type graph $T(\ma)$ is a symmetry type graph with the same number of vertices of $T(\ma)$, which edges coloured 1 and 2 are preserved from $T(\ma)$, but any (0,2)-path in $T(\ma)$ is interchanged by an edge coloured by 0.

\begin{proposition}
If a map $\ma$ has symmetry type graph $T(\ma)$ then its Petrie-dual $\ma^P$ has the petrie-dual of $T(\ma)$ as symmetry type graph.
\end{proposition}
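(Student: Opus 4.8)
The plan is to imitate the proof of the analogous statement for the ordinary dual, using the algebraic characterisation of $\Gamma$ as a centralizer. First I would recall from the previous subsection that $\fl(\ma^P)=\fl(\ma)$ as sets, that $1$- and $2$-adjacency of flags are unchanged in passing to $\ma^P$, and that a flag $\Phi$ is $0$-adjacent in $\ma^P$ to $\Phi^{s_0s_2}$; accordingly the distinguished generators of $\Mon(\ma^P)$, acting on the common flag set, are $s_0s_2,\ s_1,\ s_2$. The one elementary observation that drives everything is that $\langle s_0s_2, s_1, s_2\rangle=\langle s_0, s_1, s_2\rangle$: indeed $s_0s_2$ and $s_2$ lie in the left-hand group, hence so does $(s_0s_2)s_2=s_0$, and the reverse inclusion is obvious. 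Thus $\Mon(\ma^P)=\Mon(\ma)$ as permutation groups of $\fl(\ma)$.

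Next I would invoke the fact, established in Section~\ref{sec:maps}, that a permutation of $\fl(\ma)$ is an automorphism of $\ma$ if and only if it commutes with $s_0,s_1,s_2$; that is, $\Gamma(\ma)$ is precisely the centralizer of $\Mon(\ma)$ in $Sym(\fl(\ma))$, and likewise $\Gamma(\ma^P)$ is the centralizer of $\Mon(\ma^P)$. Since $\Mon(\ma^P)=\Mon(\ma)$, we get $\Gamma(\ma^P)=\Gamma(\ma)$ as permutation groups, and therefore $\Orb(\ma^P)=\Orb(\ma)$: the two maps have exactly the same flag orbits. In particular $T(\ma^P)$ and $T(\ma)$ have the same vertex set.

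It remains to compare edges. For $i=1,2$ the relation ``$i$-adjacent'' is literally unchanged, so the $i$-coloured edges and semi-edges of $T(\ma^P)$ are those of $T(\ma)$. For $i=0$, two orbits $\oo_\Phi$ and $\oo_\Psi$ are joined by a $0$-edge in $T(\ma^P)$ exactly when $(\Phi')^{s_0s_2}\in\oo_\Psi$ for some $\Phi'\in\oo_\Phi$, i.e. exactly when there is a walk of length two coloured $0$ then $2$ (equivalently $2$ then $0$, as $s_0s_2=s_2s_0$) from $\oo_\Phi$ to $\oo_\Psi$ in $T(\ma)$; a $0$-semi-edge occurs when $\oo_\Phi=\oo_\Psi$. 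Since the connected components of the $0$-$2$ $2$-factor of $T(\ma)$ are quotients of the four-cycles of Figure~\ref{edges}, these length-two $(0,2)$-walks are exactly the ``$(0,2)$-paths'' in the definition of the Petrie type, and so $T(\ma^P)$ is indeed the Petrie type of $T(\ma)$.

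The only point needing attention is the behaviour of the degenerate components of Figure~\ref{edges}, where some of the four flag orbits around an edge coincide and a $(0,2)$-path of $T(\ma)$ may collapse; one should run through the five quotient shapes and check that the $0$-edges (or semi-edges) produced on each by the relation $\Phi\mapsto\Phi^{s_0s_2}$ match those prescribed by the Petrie-type construction. This is a short finite case check and is, I expect, the only place where anything could slip. Alternatively, the whole argument can be phrased directly via the bijection $\pi$ of the previous subsection: taking $\pi$ to be the identity on $\fl(\ma)=\fl(\ma^P)$, it preserves $1$- and $2$-edges, exchanges $0$-edges of $\gr$ with $(0,2)$-paths, and---because $\Gamma(\ma)=\Gamma(\ma^P)$ by the centralizer argument---respects the common orbit partition, so it descends to the required isomorphism of $T(\ma^P)$ with the Petrie type of $T(\ma)$.
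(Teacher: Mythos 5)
Your proof is correct. The paper offers no argument for this proposition at all -- it is stated as an immediate consequence of the preceding definition of the Petrie type via the bijection $\pi$ (mirroring the dual case, which is likewise called ``straightforward'') -- so your write-up supplies details the authors omit. The key point you add, and the one that actually needs saying, is that $\fl(\ma^P)=\fl(\ma)$ and $\langle s_0s_2,s_1,s_2\rangle=\langle s_0,s_1,s_2\rangle$, whence the centralizer characterisation of the automorphism group gives $\Gamma(\ma^P)=\Gamma(\ma)$ and the two symmetry type graphs share a vertex set; this is exactly the justification the paper leaves implicit. One small remark: the ``short finite case check'' over the five quotient shapes of Figure~\ref{edges} that you defer is not really a separate step. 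Since the paper has already established that $\Mon(\ma)$ acts on $\mathrm{Orb}(\ma)$ via $\oo_\Phi\cdot w=\oo_{\Phi^w}$, the $0$-adjacency in $T(\ma^P)$ is by definition the orbit-level action of $s_0s_2$, which is precisely ``follow the $(0,2)$-path of length two in $T(\ma)$'' in every component, degenerate or not; so your main argument already covers those cases and the caveat can be dropped.
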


Similar to the self-dual symmetry type graph, a symmetry type graph it is said to be {\em self-Petrie} if it is isomorphic to its Petrie type.
The symmetry type graph of a self-Petrie map is a  self-Petrie symmetry type graph.

\newpage
\section{Medial symmetry type graphs and their enumeration}
\label{sec:med}

It is well-known that the medial of a tetrahedron is an octahedron and its medial is a cube-octahederon, see for instance \cite{CRP}.
While the former polyhedra are regular, the latter is only a 2-orbit edge-transitive as a map.
In the past several authors have observed that the medial of any regular map must be edge-transitive; in fact, in \cite{MonGp_Self-Inv} Hubard, Orbani\'c and Weiss showed that the medial of a regular map is either regular (if the original map is self-dual) or of type $2_{01}$.
The nature of edge-transitive tessellations have been studied by Graver and Watkins, \cite{GraverWatkins}. They were the first to determine all 14 different symmetry types of edge-transitive maps. Later, \v Siran, Tucker and Watkins \cite{EdgeT} have provided examples of maps from each of the 14 types.
Several authors have been trying to determine the nature of edge-transitive maps that are medial maps, i.e. maps that are medials of other maps.
For instance, Lemma 2.2 in \cite{EdgeT} lists six symmetry types that can be edge-transitive medials of
edge-transitive maps. In \cite{MonGp_Self-Inv} it is shown that there are, in fact, seven such symmetry types.
In Table 2 of \cite{k-orbitM} the authors give 10 symmetry types of edge-transitive maps that may be medials of other, not necessarily edge-transitive maps. Unfortunately, they miss the fact that four other edge-transitive types may also be medials.

In this section, we make use of symmetry type graphs and extended symmetry type graph, and an operation on them to obtain medial symmetry type graphs. We further enumerate all the medial types with at most 7 vertices and show that indeed all edge-transitive types are medials.

We shall say that a {\em medial symmetry type graph} is the symmetry type graph of a medial map, denoted by $T(Me(\ma))$. In what follows we classify all medial symmetry type graphs with at most 7 vertices, that is, the possible symmetry type graphs of medial $k$-orbit maps, with $k\leq 7$. To this end, we develop basic operations on the symmetry type graphs as well as on the extended symmetry type graphs, based on the flag graphs of a map and its medial.

If a map $\ma$ is not a self-dual map, we may think of the vertices of the medial symmetry type graph $T(Me(\ma))$,  as those obtained by two copies of the vertices of the symmetry type graph $T(\ma)$. 
As with the flag graph, given a vertex $\oo_\Phi$ of $T(\ma)$ we can write its corresponding two copies  in $T(Me(\ma))$ as $(\oo_\Phi, 0)$ and $(\oo_\Phi, 2)$. Note that the edges between these copies of the vertices of $T(\ma)$ must respect the colour adjacency of the flags in the flag graph of $Me(\ma)$.
Then, we can follow the same ``standard" algorithm shown in the Figure \ref{med-flag} to determine the adjacencies between the vertices of $T(Me(\ma))$.
In other words, the vertices $(\oo_\Phi,0)$ and $(\oo_\Phi,2)$ are adjacent by an edge of colour 2;
for $i=0,2$ there is an edge of colour 0 between $(\oo_\Phi,i)$ and $(\oo_\Psi,i)$ if and only if $\oo_\Phi$ and $\oo_\Psi$ are adjacent by the colour 1.
Finally, there is an edge of colour 1 between $(\oo_\Phi,i)$ and $(\oo_\Psi,i)$ if and only if $\oo_\Phi$ and $\oo_\Psi$ are adjacent by the colour 0 or 2.

Hence, if a $k$-orbit map $\ma$ is not a self-dual map, the medial symmetry type graph $T(Me(\ma))$ of $Me(\ma)$  (obtained as it was described in the paragraph above) has $2k$ vertices.
On the other hand, when $\ma$ is a self-dual $k$-orbit map, to obtain its medial symmetry type graph with $k$ vertices,
we first proceed as in the above paragraph and then take into consideration the extended symmetry type graph $\overline{T(\ma)}$.
In this case we shall identify each vertex of the form $(\oo_\Phi,0)$ with a vertex of the form $(\oo_\Psi,2)$ whenever  $\oo_\Phi$ and $\oo_\Psi$ are adjacent by the colour $D$ in $\overline{T(\ma)}$.
Thus, the edges of colour 2 in $T(Me(\ma))$ are determined by the respective duality $\delta$ on the self-dual map $\ma$.
Consequently the colours of the edges of $T(Me(\ma))$ can be defined by the following involutions:

\begin{eqnarray*}
S_0 =& s_1,& \\
S_1 =& s_0 &(\mbox{or } s_2), \\
S_2 =& d.&
\end{eqnarray*}

Note that if $Me(\ma)$ is a $k$-orbit map, with $k$ odd, then $\ma$ is a self-dual $k$-orbit map. However, if $k$ is even, then $\ma$ is either a $k$- or a $k/2$-orbit map. Hence, to obtain all medial symmetry type graphs with at most 7 vertices, one has to apply the above operations to all symmetry type graphs with at least 3 vertices, as well as to all extended symmetry type graphs with at most 7 vertices.

In \cite[Table 2]{k-orbitM} are given the symmetry types of medials coming from 1- and 2-orbit maps. Following the algorithm described above, in the left table of Table~\ref{med1-3,4-5} we repeat the information contained in \cite[Table 2]{k-orbitM} and give the symmetry type of medials coming from 3-orbit maps. In the right table of Table~\ref{med1-3,4-5} and in both tables of Table~\ref{med6-7} are given the symmetry type of medials coming from k-orbit self-dual maps, for $4 \leq k \leq 7$. In the second row, of all tables in Tables \ref{med1-3,4-5} and \ref{med6-7}, ``P'' stands for properly self-dual, ``I" for improperly self-dual and ``N" for no duality needed, the number after the I, in the cases it exists, stands for the type of improperly duality that the map possesses. All medial types with at most 5 vertices are already given in Figures~\ref{twoorbit}-\ref{fiveorbit}; medial types with 6 and 7 vertices are given in Figures~\ref{6d6m} and \ref{medial7orbit}, respectively.

\begin{table}[h!]
\centering
\begin{tabular}{|c|c|c|c||c|c|c|c|c|c|c|}
\hline
Sym type   & \multicolumn{3}{|c|}{Sym type}               &  Sym type   & \multicolumn{4}{|c|}{Sym type}                \\
 of $\ma$   &  \multicolumn{3}{|c|}{of $Me(\ma)$ }      &  of $\ma$   &  \multicolumn{4}{|c|}{of $Me(\ma)$ }        \\

\hline
   Duality    &         P        &         I       &        N               &    Duality    &        P        &      I-1       &        I-2       &   I-3  \\  

\hline
        1        &       1          &       ---       &  $2_{01}$        & $4_{A_p}$ & $4_{B_d}$ & $4_{H_d}$ &       ---         &    ---   \\
        2        &    $2_2$     &         2       &  $4_G$             & $4_{B_p}$ & $4_{A_d}$ & $4_{E_d}$ &       ---         &     ---   \\
    $2_0$    &       ---        &       ---       &  $4_H$           & $4_{C_p}$ & $4_{C_d}$ & $4_{C_p}$ & $4_{G_d}$ & $4_{G_p}$\\
    $2_1$    &  $2_{02}$  &   $2_0$    &  $4_C$              & $4_{D_p}$ &      ---       & $4_{D_d}$ &      ---         &     ---    \\
    $2_2$    &       ---        &       ---       &   $4_H$            & $4_{E_p}$  &      ---       & $4_{B_p}$ & $4_{H_p}$ &     ---   \\
 $2_{01}$  &       ---        &       ---      &  $4_A$              &    $4_F$      &      ---       &     $4_A$    &      ---          &     ---   \\
 $2_{02}$  &  $2_{12}$  &    $2_1$   &  $4_F$              & $4_{G_p}$  &      ---      &     $4_E$     &      ---          &     ---   \\
 $2_{12}$  &       ---        &       ---      & $4_A$               & $4_{H_p}$  &      ---      & $4_{A_p}$ & $4_{E_p}$  &     ---   \\
   $3^0$     &       ---        &       ---      &  $6_D$              &    $5_A$      &      ---      & $5_{D_{pd}}$ &     ---      &     ---   \\
   $3^2$     &       ---        &       ---      &  $6_D$              & $5_{B_p}$  &       ---     & $5_{D_{op}}$ &    ---       &     ---   \\
 $3^{02}$  &   $3^0$     &       ---      &  $6_M$              & $5_{C_p}$ & $5_{C_d}$ &         ---         &    ---      &     ---   \\

 \hline
 \end{tabular}
\caption{Medial symmetry types from 1-, 2-, 3-orbit maps (in the left), and from 4-, and 5-orbit maps (in the right).}
\label{med1-3,4-5}
\end{table}

\begin{table}[h!]
\centering
\begin{tabular}{|c|c|c|c||c|c|c|c|c|c|c|}
\hline
Sym type   & \multicolumn{3}{|c|}{Sym type}               &  Sym type   & \multicolumn{2}{|c|}{Sym type}                \\
 of $\ma$   &  \multicolumn{3}{|c|}{of $Me(\ma)$ }      &  of $\ma$   &  \multicolumn{2}{|c|}{of $Me(\ma)$ }        \\
\hline
    Duality     &         P        &         I-1       &         I-2          &    Duality    &         P         &           I             \\  
\hline
 $6_{A_p}$ & $6_{C_d}$ & $6_{N_p}$ &         ---          &    $6_K$      &         ---         &  $6_{Q}$       \\ 
 $6_{B_p}$ & $6_{B_d}$ &     $6_N$     &         ---          &  $6_{L_p}$ &        ---         & $6_{D_p}$     \\
 $6_{C_p}$ & $6_{A_d}$ & $6_{M_p}$ &         ---          &  $7_{A_p}$ & $7_{A_d}$ &          ---          \\
 $6_{D_p}$ &       ---         & $6_{L_p}$  &         ---          &  $7_{B_p}$ &          ---      &     $7_H$        \\
 $6_{E_p}$ &       ---         & $6_{O_p}$ &   $6_{I_d}$   &  $7_{C_p}$ &          ---      & $7_{H_p}$     \\
 $6_{F_p}$ &       ---         &     $6_L$     &         ---          &  $7_{D_p}$ &          ---      &      $7_I$        \\
 $6_{G_p}$ &       ---         & $6_{G_d}$ &     $6_O$       &  $7_{E_p}$ &          ---      &  $7_{I_p}$     \\
 $6_{H_p}$ &       ---         & $6_{H_d}$ &     $6_P$        &  $7_{F_p}$ &          ---      &  $7_{E_d}$    \\
 $6_{I_p}$ &       ---         & $6_{E_d}$ &   $6_{P_p}$    &    $7_{G}$   &          ---      & $7_{J}$         \\
 $6_{J_p}$ &       ---         & $6_{Q_p}$ &         ---          &                     &                    &                       \\ 
\hline
 \end{tabular}
\caption{Medial symmetry types from 6- and 7-orbit maps.}
\label{med6-7}
\end{table}

\begin{figure}[htbp]
\begin{center}
\includegraphics[width=12.5cm]{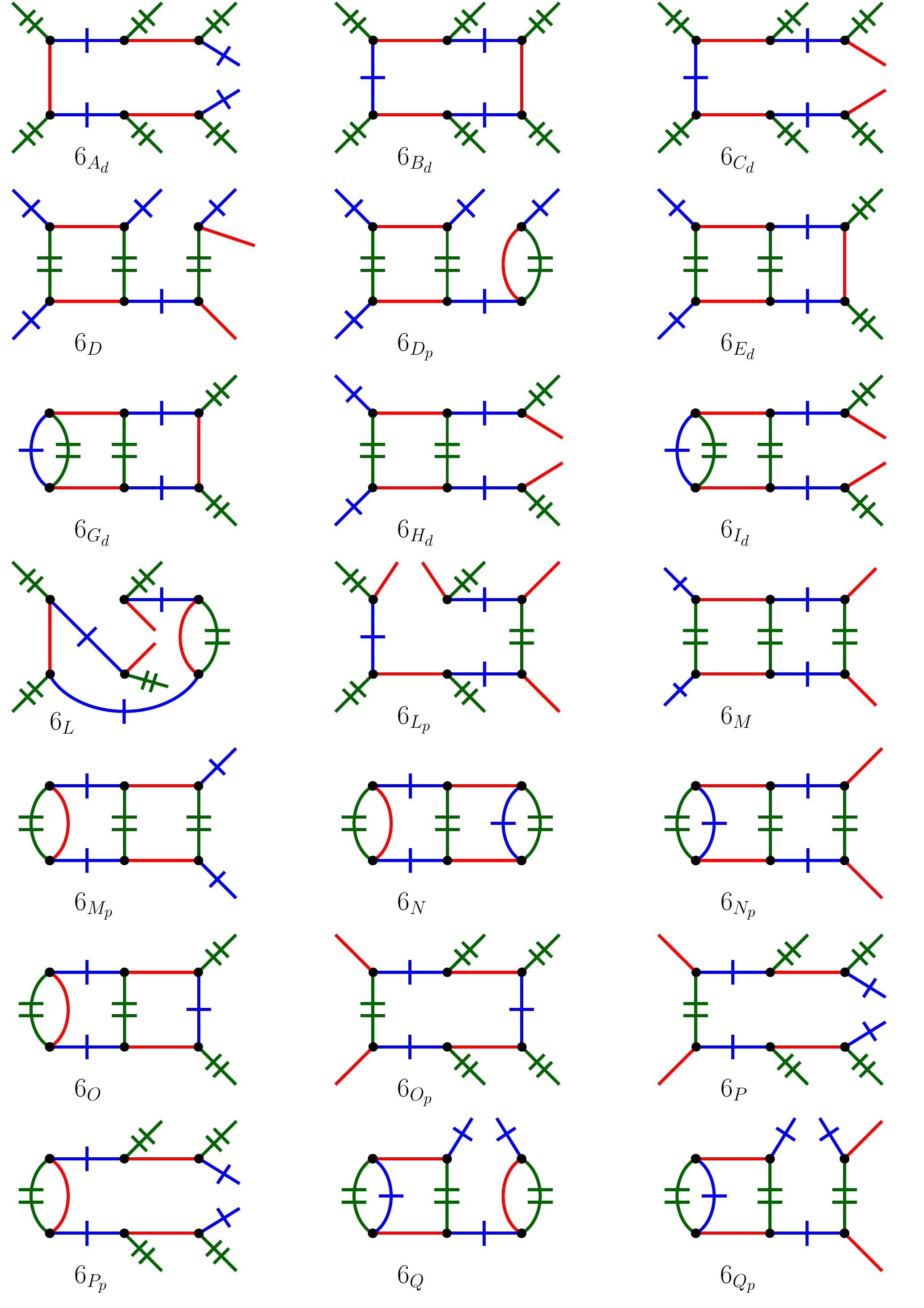}
\caption{Medial symmetry types with 6 vertices.}
\label{6d6m}
\end{center}
\end{figure}

\begin{figure}[htbp]
\begin{center}
\vspace{-3.5cm}
\includegraphics[width=12.5cm]{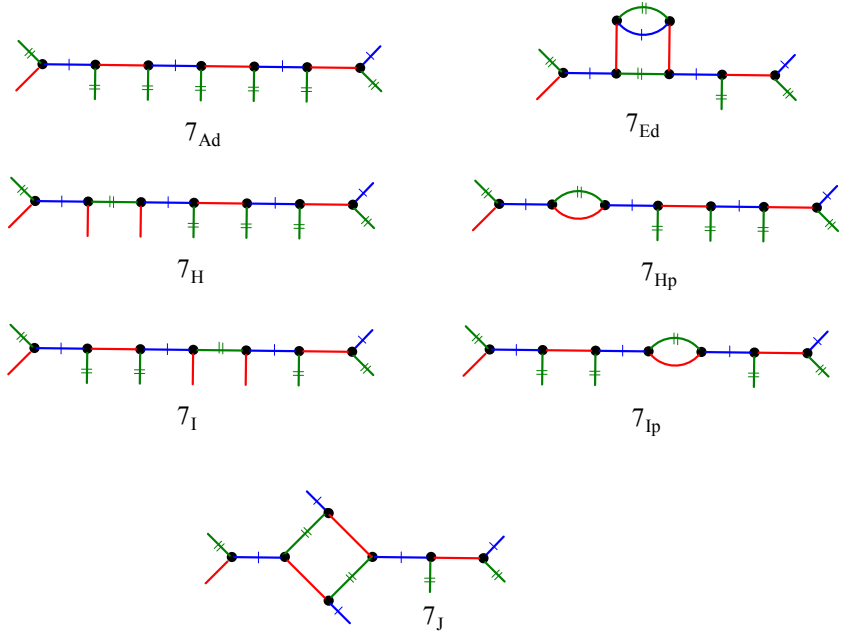}
\vspace{-3.5cm}
\caption{Medial symmetry types with 7 vertices.}
\label{medial7orbit}
\end{center}
\end{figure}

\newpage
To show that each of the 14 edge-transitive symmetry type graphs is the symmetry type graph of a medial map we shall, for each type, give an example. That is, for each of the 14 edge-transitive types we give a map whose medial map is of the given type. 

Orbani\'c, in \cite{ETdatabase} generated a data base of small non-degenerated edge transitive maps. His data base contains small non-degenerated edge transitive maps of types 1, $2_{12}$, $2_2$, $4_F$, $4_{Hd}$ and $4_{Gd}$; the remaining types can be obtained from these one by making use of the Petrie and dual operations (see Figures~\ref{twoorbit} and~\ref{fourOrbitEdgeTrans}).

Using the tables in Table~\ref{med1-3,4-5}, we obtain Table \ref{edge-t_med}. The first column of Table \ref{edge-t_med} lists all the candidate types for maps that could, by applying medial operation, yield the maps with edge-transitive types. The second column indicates which type of duality the map in the first column should have to obtain the medial type in the third column. In the second column the number after the Improper, in the cases it exists, stands for the type of improperly duality that the map possesses, see right table of Table~\ref{med1-3,4-5}.

\begin{table}[h!]
\begin{center}
\begin{tabular}{|c|c|c|c||c|c|c|c|c|c|c|}
\hline
           $T(\ma)$                    &    Duality    &    $T(Me(\ma))$   \\

\hline
\multirow{2}{*}{1}               &    Proper     &           1               \\
                                              &     None      &     $2_{01}$        \\
\hline
\multirow{3}{*}{$2_{02}$}  &    Proper     &     $2_{12}$        \\
                                              &  Improper   &        $2_1$          \\
                                              &     None      &        $4_F$          \\
\hline
                           $2_1$         &  Improper  &         $2_0$          \\
\hline
\multirow{3}{*}{$2$}            &    Proper     &        $2_2$          \\
                                               &  Improper   &          $2$          \\
                                               &     None      &        $4_G$          \\
\hline
\multirow{2}{*}{$4_{C_p}$} & Improper-2 &   $4_{G_d}$      \\
                                               & Improper-3 &   $4_{G_p}$      \\
\hline
                     $2_{0},2_{2}$  &      None      &     $4_H$           \\
\hline
                       $4_{A_p}$      &  Improper    &   $4_{H_d}$      \\
\hline
                       $4_{E_p}$      & Improper-2 &   $4_{H_p}$      \\
\hline

 \end{tabular}
\caption{Edge-transitive medial symmetry types}
\label{edge-t_med}
\end{center}
\end{table}

\begin{theorem}\label{EdgeT-Med}
Each of the 14 edge-transitive symmetry type graphs is the symmetry type graph of a medial map.
\end{theorem}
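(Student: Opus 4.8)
The plan is to realise each of the fourteen edge-transitive symmetry type graphs explicitly as the symmetry type graph of a medial, with Table~\ref{edge-t_med} serving as a road map. By Theorem~\ref{faceorbits} the edge-transitive types are the regular type $1$, the six two-orbit types $2,2_0,2_2,2_{01},2_{02},2_{12}$ (the type $2_1$ has two $0$-$2$ components and is excluded), and the seven four-orbit types whose $0$-$2$ factor is connected; these are exactly the fourteen types listed in the third column of Table~\ref{edge-t_med}. For each row of that table we must exhibit one map $\ma$ whose symmetry type graph is the type in the first column and which carries a self-duality of the kind indicated in the second column (proper, improper of the stated subtype, or none). Granted such an $\ma$, the medial operation of Section~\ref{sec:med} — equivalently the bookkeeping recorded in Tables~\ref{med1-3,4-5} and~\ref{med6-7}, which is forced by the flag-graph rules of Figure~\ref{med-flag} together with $\Gamma(Me(\ma))\cong\mathcal D(\ma)$ and Lemma~\ref{dualityaction} — pins down $T(Me(\ma))$ to be the edge-transitive type in the third column. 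Running over all rows produces the list $1,2_{01},2_{12},2_1,4_F,2_0,2,2_2,4_G,4_{G_d},4_{G_p},4_H,4_{H_d},4_{H_p}$, which is precisely the set of all fourteen edge-transitive types; hence it suffices to supply the source maps.

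For the two ``regular'' rows the examples are classical: the tetrahedron $\{3,3\}$ is a properly self-dual regular map and its medial is the octahedron (type $1$), while the cube $\{4,3\}$ is a regular map that is not self-dual and its medial is the cuboctahedron (type $2_{01}$); this is the statement from \cite{MonGp_Self-Inv} that the medial of a regular map has type $1$ or $2_{01}$. For the rows whose source is an edge-transitive two- or four-orbit type one starts from the small non-degenerate edge-transitive maps of types $1,2_{12},2_2,4_F,4_{Hd},4_{Gd}$ in Orbani\'c's census \cite{ETdatabase}, and applies the dual and Petrie-dual operations of Section~\ref{duality}, whose effect on the seven two-orbit types (resp.\ on the edge-transitive four-orbit types) is displayed in Figures~\ref{twoorbit} and~\ref{fourOrbitEdgeTrans}, to obtain maps of types $2_{02},2$ and of the remaining needed four-orbit types as well. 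For each candidate map one then decides, by a finite computation of the action of $\Gamma(\ma)$ on the flags together with the algebraic self-duality criterion $s_0=d^{-1}s_2d,\ s_1=d^{-1}s_1d,\ s_2=d^{-1}s_0d$ from Section~\ref{dual} and Lemma~\ref{dualityaction}, whether it is properly self-dual, improperly self-dual (and of which improper subtype), or admits no self-duality, and selects representatives matching the corresponding rows of Table~\ref{edge-t_med}.

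The part I expect to be the main obstacle is the rows whose source maps are \emph{not} edge-transitive: the type $2_1$ with an improper self-duality (yielding medial $2_0$), and the four-orbit types $4_{A_p},4_{C_p},4_{E_p}$ carrying an improper self-duality of a precisely specified subtype (``improper-$2$'' versus ``improper-$3$'', yielding the medials $4_{G_d},4_{G_p},4_{H_d},4_{H_p}$). These types are absent from the edge-transitive census and cannot be reached from it by the dual or Petrie operation (both preserve edge-transitivity), so genuinely new examples are required; moreover, as observed after the proposition on extended symmetry type graphs in Section~\ref{duality}, the symmetry type graph alone does not record which kind of self-duality a self-dual map admits (a chiral map can be properly or improperly self-dual), so one must exhibit concrete maps and compute the permutation of the flag orbits induced by a duality, using Lemma~\ref{dualityaction} to see that this permutation is well defined. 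Concretely, one locates such maps in the census of small $k$-orbit maps with $k\le 4$ (or constructs them from small regular and two-orbit maps and their Wythoffian relatives), verifies the orbit pattern of a duality, and reads off from Tables~\ref{med1-3,4-5}--\ref{med6-7} that their medials are of the claimed edge-transitive types. Assembling these finitely many source maps with those of the previous paragraph, all fourteen edge-transitive symmetry type graphs arise as medial symmetry type graphs, which proves the theorem.
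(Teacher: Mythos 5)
Your reduction of the theorem to Table~\ref{edge-t_med} is the right frame, but the proposal has a genuine gap precisely at the point you yourself flag as ``the main obstacle'': the rows whose source types are not edge-transitive ($2_1$ and $2_{02}$ with prescribed dualities, and $4_{A_p},4_{C_p},4_{E_p}$ with prescribed improper subtypes). You propose to ``locate such maps in the census of small $k$-orbit maps with $k\le 4$,'' but no such census is available in this paper --- the only database cited is Orbani\'c's census of \emph{edge-transitive} maps \cite{ETdatabase}, and, as you correctly observe, the dual and Petrie operations cannot leave the edge-transitive world. Exhibiting a map of type $4_{E_p}$ carrying an improper self-duality of subtype~2 (and similarly for the other problematic rows) is the actual mathematical content of the theorem for those types; deferring it to an unspecified search leaves the theorem unproved. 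The paper closes this gap by running the search in the opposite direction: it uses the group-theoretic characterization (from \cite[Sec.~4.1]{k-orbitM}) that a map $\mathcal{N}$ with $\Mon(\mathcal{N})=\langle s_0,s_1,s_2\rangle$ is a medial if and only if $\Mon(\mathcal{N})$ is a quotient of $\mathcal{C}_4$, the subgroup $H=\langle s_1,s_0,s_2s_1s_2\rangle$ has index $2$, and a flag stabilizer $N$ lies in $H$; it then tests the edge-transitive maps already in the database against this criterion in {\sc Magma} and \emph{de-medializes} the successful ones (letting $H$ act on the cosets of $N$ and relabelling generators) to produce the source maps $\ma$ of Table~\ref{edge-t_med_ex}. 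This inversion is the key idea your proposal is missing: it makes the existing edge-transitive census suffice and never requires a census of non-edge-transitive maps.

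A smaller but real error: you assert that the two-orbit type excluded from the edge-transitive list is $2_1$, ``having two $0$-$2$ components.'' In fact $2_1$ has a single $0$-$2$ component (a double edge on two vertices) and \emph{is} edge-transitive --- it appears in the third column of Table~\ref{edge-t_med} --- whereas the excluded type is $2_{02}$, whose $0$-$2$ factor consists of two vertices each carrying two semi-edges. Your final list of fourteen types is nevertheless correct, so this is an internal inconsistency rather than a fatal miscount, but it should be fixed.
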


\begin{proof}
Following \cite{k-orbitM}, a map $\ma$ can be regarded as the action of $\Mon(\ma)$ on the set of flags $\fl(\ma)$. 
This in turn is equivalent to consider the action of $\mathcal{C}:=\langle r_0, r_1, r_2 \mid r_0^2=r_1^2=r_2^2=(r_0r_2)^2=id\rangle$ on the cosets $\mathcal{C} / N$ (where $N \leq \mathcal{C}$ with index $[\mathcal{C} : N]$ finite, is thought as the stabilizer of a given flag $\Phi$ of $\ma$), given by a homomorphism $\chi: \mathcal{C} \to Sym(\mathcal{C}/N)$. 
In this notation, the monodromy group of $\ma$ is the image of $\chi$, together with the generators $\chi(s_i)$, $i=0,1,2$.

According to \cite[Sec. 4.1]{k-orbitM}, the medial map $Me(\ma)$ of $\ma$ can be regarded as the action of  $\mathcal{C}$  on the cosets $\mathcal{C} / f^{-1}\psi(N)$, where $\psi$ is the isomorphism mapping $(r_0, r_1, r_2) \mapsto (s_1, s_0, s_{212})$ and $f:\mathcal{C}  \to \mathcal{C}_4 = \langle s_0, s_1, s_2\vert s_0^2, s_1^2, s_2^2, (s_0s_2)^2, (s_1s_2)^4\rangle$ is the natural epimorphism.
In other words, a map $\ma$ given by three involutions $s_0$, $s_1$ and $s_2$ generating the monodromy group $\mathrm{Mon}(\ma) = \langle s_0, s_1, s_2\rangle$ is a medial of a map if and only if the $\Mon(\ma)$ is a quotient of  $\mathcal{C}_4$, 
the index of the subgroup $H := \langle s_1, s_0, s_2s_1s_2\rangle \leq \mathrm{Mon}(M)$ is 2, and any stabilizer $N \leq \mathrm{Mon}(\ma)$ of a flag is contained in $H$.

Then the de-medialized map is defined by taking the action of $H$ on the cosets of $N \leq H$ and relabeling generators of $H$ in the respective order by $s_0$, $s_1$ and $s_2$. Note that the dual of the result (yielding the same medial map) could be obtained by taking the conjugate stabilizer $s_2Ns_2$ instead of $N$.

Using this method, software package {\sc Magma} and the database of small non-degenerate edge-transitive maps \cite{ETdatabase}, all the examples  supporting options in Table \ref{edge-t_med} are calculated and summarized in Table \ref{edge-t_med_ex}. It is not claimed that they are minimal examples, though we tried to choose minimal such cases where both an original map and its medial are non-degenerate and both with edge multiplicity 1.
\end{proof}

\begin{sidewaystable}[htp]
\begin{tabular}{|c|c|c|c|c|c|c|c|c|c|c|c|}
\hline
\multicolumn{6}{|c|}{\bf Edge-transitive $\mathrm{Med}(\ma)$} & \multicolumn{6}{|c|}{\bf Example map $\ma$} \\
\hline
{\bf InitType}&{\bf  ID} & {\bf Tran}   & {\bf  MType} & {\bf Genus} & {\bf Symbol} & {\bf Type} & {\bf $|V|$} & {\bf $|E|$} & {\bf $|F|$} & {\bf $|P|$} & {\bf Map symbol} \\
\hline
1 & 41 & P & 1 &  1 & $\langle 4; 4; 8\rangle$ & 1 & 8 & 16 & 8 & 8 & $\langle  4 ;  4 ;  4 \rangle$\\
\hline
$2$ &113 & D &  $2_{01}$ & 0 & $\langle 4; 3,4; 8\rangle$ & 1 & 6 & 12 & 8 & 4 & $\langle  4 ;  3 ;  6 \rangle$\\
\hline
$2$ & 8335 &  & $2_{12}$ & 25 & $\langle 4,4; 10; 10\rangle$ & $2_{02}$ & 16 & 80 & 16 & 40 & $\langle  10 ;  10 ;  4 \rangle$\\
\hline
$2$ &504 & DP &  $2_1$ & 1 & $\langle 4; 4; 6,8\rangle$ & $2_{02}$ & 12 & 24 & 12 & 2 & $\langle  4 ;  4 ;  24 \rangle$\\
\hline
$3$ & 180 & P & $4_F$ & -8 & $\langle 4,4; 12,4; 6,12\rangle$ & $2_{02}$ & 9 & 18 & 3 & 3 & $\langle  4 ;  12 ;  12 \rangle$\\
\hline
$2ex$ &456 & D & $2_0$ & 37 & $\langle 4; 8; 8\rangle$ & $2_{1}$ & 36 & 144 & 36 & 24 & $\langle  8 ;  8 ;  12, 12 \rangle$\\
\hline
$2ex$ & 21 &  & $2_2$ & 7 & $\langle 4; 7; 14\rangle$ & 2 & 8 & 28 & 8 & 14 & $\langle  7 ;  7 ;  4 \rangle$\\
\hline
$2ex$ &2 & DP &  2& 1 & $\langle 4; 4; 10\rangle$ & 2 & 5 & 10 & 5 & 2 & $\langle  4 ;  4 ;  10 \rangle$\\
\hline
$5$ & 13 & D  & $4_G$ & 1 & $\langle 4; 6,3; 12\rangle$ & 2 & 14 & 21 & 7 & 3 & $\langle  3 ;  6 ;  14 \rangle$\\
\hline
$5$ & 301 &  & $4_{Gd}$ & 19 & $\langle 4,4; 8; 8\rangle$ & $4_{Cp}$ & 18 & 72 & 18 & 12 & $\langle  8 ;  8 ;  12, 12 \rangle$\\
\hline
$5$ & 275 & DP & $4_{Gp}$ & 18 & $\langle 4; 8; 8,8\rangle$ & $4_{Cp}$ & 17 & 68 & 17 & 4 & $\langle  8 ;  8 ;  34, 34 \rangle$\\
\hline
$4$ & 450 & D & $4_{H}$ & -8 & $\langle 4; 7,4; 8\rangle$ & $2_{2}$ & 14 & 28 & 8 & 8 & $\langle  4 ;  7 ;  7 \rangle$\\
\hline
$4$& 17200 &  & $4_{Hd}$& 26 & $\langle 4,4; 8; 8\rangle$ & $4_{Ap}$ & 25 & 100 & 25 & 20 & $\langle  8 ;  8 ;  10, 10 \rangle$\\
\hline
$4$&7496 & DP & $4_{Hp}$ & 19 & $\langle 4; 8; 8,8\rangle$ & $4_{Ep}$ & 18 & 72 & 18 & 36 & $\langle  8 ;  8 ;  4 \rangle$\\
\hline
\end{tabular}
\caption{Examples of edge-transitive maps of all 14 types that are medials. 
\label{edge-t_med_ex}}
\vspace{1cm}
The table is divided into two halves and each line represents one example. The first half contains data needed to retreive a map $Me(\ma)$ from the database \cite{ETdatabase}. Three parameters are needed: a type ({\bf MType}, one of the types 1, 2, 2ex, 3, 4 or 5 according to  \cite{GraverWatkins}), an identifier within a subdatabase for the type  (column {\bf ID}) and a sequence of operations  (column {\bf Tran}, where D stands for dual and P for Petrie-dual; note: operations compose like functions). The retreived map (denoted by $Me(\ma)$)  is the medial of the map $\ma$ whose type, number of vertices, edges, faces, Petrie-polygons and symbol are in the seventh to twelfth columns of the table (second half). 
In a symbol of the form $\langle a_1, \ldots, a_i; b_1, \ldots, b_j;c_1, \ldots, c_k\rangle$,  the numbers $i$, $j$, and $k$  are the numbers of orbits of  vertices, faces and Petrie-polygons; the $a$'s, $b$'s and $c$'s denote the sizes of the vertices, faces and Petrie-polygons in each  particular orbit. We further note that the edge-multiplicity of all the maps in the table is 1 and all the maps are non-degenerate (i.e. all parameters in map symbol are greater or equal to 3). Column {\bf MType} denotes the type of $Me(\ma)$, while the column {\bf Type} denotes the type of $\ma$. In {\bf Genus} column we use special notation, namely non-negative numbers denote orientable genus while negative numbers denote non-orientable genus of both maps. 
\end{sidewaystable}

To conclude this section, we introduce a short discussion that deals with the medial of a medial map.

\begin{proposition}
Let $\ma$ be a $k$-orbit map. If $Me(Me(\ma))$ is also a $k$-orbit map, then $\ma$ has Schl\"afli type $\{4,4\}$.
\end{proposition}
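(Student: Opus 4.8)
The plan is to apply the orbit-counting dichotomy for medials twice, and then use the fact that a self-dual map all of whose vertices have valency $4$ is forced to be equivelar of type $\{4,4\}$. First I would invoke the theorem of Orbani\'c, Pellicer and Weiss recalled in Section~\ref{sec:maps}: for any $j$-orbit map $\mathcal{K}$, the medial $Me(\mathcal{K})$ is a $j$-orbit map if $\mathcal{K}$ is self-dual and a $2j$-orbit map otherwise. Apply this with $\mathcal{K}=\ma$, a $k$-orbit map. If $\ma$ were not self-dual, then $Me(\ma)$ would be a $2k$-orbit map, and hence $Me(Me(\ma))$ would be a $2k$- or a $4k$-orbit map, neither equal to $k$ since $k\geq 1$; this contradicts the hypothesis. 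So $\ma$ is self-dual and $Me(\ma)$ is a $k$-orbit map. Applying the dichotomy a second time to $Me(\ma)$: if $Me(\ma)$ were not self-dual, then $Me(Me(\ma))$ would be a $2k$-orbit map, again a contradiction. Hence the hypothesis forces both $\ma$ and $\mathcal{N}:=Me(\ma)$ to be self-dual.

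Next I would exploit the special structure of $\mathcal{N}=Me(\ma)$. By Section~\ref{sec:maps} every vertex of $\mathcal{N}$ has valency $4$, so the orbit of each flag of $\mathcal{N}$ under $\langle s_1,s_2\rangle$ has size $8$. A self-duality of $\mathcal{N}$ is, in particular, a bijection of $\fl(\mathcal{N})$ interchanging $i$-adjacency with $(2-i)$-adjacency; it therefore carries the $\langle s_1,s_2\rangle$-orbits (the vertices of $\mathcal{N}$) bijectively onto the $\langle s_0,s_1\rangle$-orbits (the faces of $\mathcal{N}$), preserving their sizes. Consequently every face of $\mathcal{N}$ is incident with $8$ flags, i.e. is a $4$-gon, so $\mathcal{N}=Me(\ma)$ is equivelar of Schl\"afli type $\{4,4\}$.

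Finally I would transfer this back to $\ma$. From the flag-level description of the medial in Section~\ref{sec:maps}, the faces of $Me(\ma)$ are the $\langle S_0,S_1\rangle$-orbits of $\fl(Me(\ma))$; since $S_0=s_1$ and $S_1\in\{s_0,s_2\}$, this group acts on the flags of the form $(\Phi,0)$ exactly as $\langle s_1,s_2\rangle$ acts on $\fl(\ma)$, and on the flags $(\Phi,2)$ exactly as $\langle s_0,s_1\rangle$ does. Hence the faces of $Me(\ma)$ correspond bijectively to the vertices and the faces of $\ma$: the face of $Me(\ma)$ attached to a vertex $v$ of $\ma$ has $\deg_\ma(v)$ sides, and the one sitting inside a face $f$ of $\ma$ has as many sides as $f$ has edges. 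Since we have just shown that all faces of $Me(\ma)$ are $4$-gons, every vertex of $\ma$ has valency $4$ and every face of $\ma$ has $4$ edges; that is, $\ma$ has Schl\"afli type $\{4,4\}$, as claimed.

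I expect the only delicate point to be the middle step: one must argue with self-duality at the level of the flag bijection rather than of the symmetry type graph (the excerpt stresses that $T(\ma)$ alone need not record whether a self-duality is proper or improper, so no appeal to $T(\ma)$ is safe here), and one must verify carefully that a duality matches the multiset of vertex valencies with the multiset of face sizes. The two applications of the orbit-counting theorem and the combinatorics of the medial operation are then routine.
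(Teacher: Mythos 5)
Your proof is correct and follows essentially the same route as the paper's: deduce from the orbit-counting dichotomy that both $\ma$ and $Me(\ma)$ must be self-dual, conclude that the $4$-valent self-dual map $Me(\ma)$ has type $\{4,4\}$, and transfer this back to $\ma$ via the correspondence between faces of $Me(\ma)$ and the vertices and faces of $\ma$. You simply fill in more detail (the double application of the dichotomy and the flag-orbit size argument) than the paper's terser version.
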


\begin{proof}
$Me(Me(\ma)$ is a $k$-orbit map if and only if both $Me(\ma)$ and $\ma$ are self-dual maps. Since $Me(\ma)$ is a medial map, then each of its vertices has valency 4; the fact that is self-dual implies that $Me(\ma)$ has Schl\"afly type $\{4,4\}$. On the other hand the faces of $Me(\ma)$ correspond to the vertices and faces of $\ma$. Because each face of $Me(\ma)$ is a 4-gon, each face of $\ma$ is also a 4-gon and each vertex of $\ma$ has valency 4, implying the proposition.
\end{proof}

The maps of type $\{4,4\}$ are maps on the torus or on the Klein Bottle. 
In \cite{toroids}, Hubard, Orbani\'c, Pellicer and Weiss study the symmetry types of equivelar maps on the torus. The maps of type $\{4,4\}$ on the torus have symmetry type 1, 2, $2_1$, $2_{02}$ or $4_{C_p}$ and are all self-dual. The medial of a map $\{4,4\}$ on the torus of type 1, 2 or $4_{C_p}$ is of the same type as the original, while for types $2_1$ and $2_{02}$ the medial is precisely of the other type. Therefore $Me(Me(\ma))$ has the same symmetry type graph, whenever $\ma$ is a map on the torus of Schl\"afli type $\{4,4\}$.

In \cite{KleinBottle} Wilson shows that there are two kinds of map of type $\{4,4\}$ in the Klein Bottle, and denotes them by $\{4,4\}_{\setminus m,n \setminus}$ and $\{4,4\}_{|m,n|}$, respectively. From the \cite[Table I]{KleinBottle} we can see that these maps have $2mn$ edges, and thereby $8mn$ flags.
Moreover, the automorphism group of $\{4,4\}_{\setminus m,n \setminus}$ it has $4m$ elements,
while for $\{4,4\}_{|m,n|}$ has $8m$ elements if $n$ is even and $4m$ otherwise.
Thus, $\{4,4\}_{\setminus m,n \setminus}$ is a $2n$-orbit map and $\{4,4\}_{|m,n|}$ has $n$ flag orbits if $n$ is even and $2n$ otherwise.

For a map of type $\{4,4\}_{\setminus m,n \setminus}$, it can be seen that
$Me(Me(\{4,4\}_{\setminus m,n \setminus})) = \{4,4\}_{\setminus 2m,2n \setminus}$;
which has $32mn$ flags and its automorphism group has $8m$ elements.
Hence, the map $Me(Me(\{4,4\}_{\setminus m,n \setminus}))$ is a $4n$-orbit map (i.e. has two times the number of orbits than the map $\{4,4\}_{\setminus m,n \setminus}$).
On the other hand, if the map $\ma$ is of type $\{4,4\}_{|m,n|}$, the map $Me(Me(\ma))$ is the dual map of $\{4,4\}_{|2m,2n|}$. 
Since for any map and its dual have the same number of flag orbits, and the edges on both maps are in one-to-one correspondence, we can compute that $Me(Me(\ma))$ has $32mn$ flags and its automorphism group has $16m$ elements. Hence the map $Me(Me(\{4,4\}_{|m,n|}))$ is a $2n$-orbit map. We therefore have the following proposition.

\begin{proposition}
Let $\ma$ be a $k$-orbit map. Then $Me(Me(\ma))$ is a $k$-orbit map if $\ma$ is a map on the torus of type $\{4,4\}$, or is a map on the Klein Bottle of type $\{4,4\}_{|m,n|}$, where $n$ is odd.
\end{proposition}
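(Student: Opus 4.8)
The plan is to split along the two cases of the hypothesis and, in each, reduce the claim to facts already assembled in the discussion preceding the proposition. The key reduction is the equivalence extracted in the proof of the previous proposition: for a $k$-orbit map $\ma$, the double medial $Me(Me(\ma))$ is again a $k$-orbit map if and only if both $\ma$ and $Me(\ma)$ are self-dual; and this rests on the fact recalled at the end of Section~\ref{sec:maps} that the medial of a $k$-orbit map has $k$ or $2k$ flag orbits according as the map is or is not self-dual.

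First I would dispose of the toroidal case. By the classification of equivelar maps on the torus in \cite{toroids}, every map $\ma$ of Schl\"afli type $\{4,4\}$ on the torus is self-dual, with symmetry type one of $1$, $2$, $2_1$, $2_{02}$ or $4_{C_p}$. Since the medial is performed on the same surface, every vertex of $Me(\ma)$ has valency $4$, and the faces of $Me(\ma)$ correspond to the ($4$-valent) vertices and the (square) faces of $\ma$, the map $Me(\ma)$ is again of type $\{4,4\}$ on the torus, hence self-dual by the same classification. The equivalence above then gives that $Me(Me(\ma))$ is a $k$-orbit map. (One can moreover track types: the medial fixes the toroidal types $1$, $2$, $4_{C_p}$ and swaps $2_1$ with $2_{02}$, so $Me(Me(\ma))$ in fact has exactly the symmetry type graph of $\ma$.)

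Next I would treat the Klein bottle. Let $\ma=\{4,4\}_{|m,n|}$ with $n$ odd. By Wilson's table in \cite{KleinBottle}, $\ma$ has $2mn$ edges, hence $8mn$ flags, and, $n$ being odd, $|\Gamma(\ma)|=4m$; since $\Gamma(\ma)$ acts semiregularly on flags, the $8mn$ flags split into orbits of size $4m$, so $\ma$ is a $2n$-orbit map and here $k=2n$. As recorded in the discussion above, $Me(Me(\ma))$ is the dual of $\{4,4\}_{|2m,2n|}$; for that map the second parameter $2n$ is even, so its automorphism group has order $16m$, it has $2(2m)(2n)=8mn$ edges, hence $32mn$ flags, and therefore $32mn/16m=2n$ flag orbits. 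Since a map and its dual have the same number of flag orbits, $Me(Me(\ma))$ is a $2n=k$-orbit map. (This also pinpoints the role of the hypothesis: were $n$ even, $\ma$ itself would be an $n$-orbit map while $Me(Me(\ma))$ would remain $2n$-orbit, so the equality would fail.)

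The only genuinely non-routine input — and the step I expect to be the main obstacle — is the geometric identity $Me(Me(\{4,4\}_{|m,n|}))=\bigl(\{4,4\}_{|2m,2n|}\bigr)^{*}$, i.e.\ that applying the edge-midpoint construction twice to a $\{4,4\}$-map on the Klein bottle doubles both of Wilson's parameters. Verifying it amounts to unrolling $\{4,4\}_{|m,n|}$ to the square tiling of the plane together with the glide-reflection side-pairings that define it, carrying out the medial construction twice, and reading off the resulting side-pairings; this bookkeeping is exactly what is used in the paragraph preceding the proposition, and once it is granted the orbit counts above are immediate from Wilson's group orders and the semiregularity of $\Gamma(\ma)$ on flags. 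The toroidal case, by contrast, needs no doubling identity at all, resting only on the self-duality of every toroidal $\{4,4\}$-map from \cite{toroids}.
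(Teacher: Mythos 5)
Your proof is correct and follows essentially the same route as the paper, which establishes this proposition precisely by the two preceding paragraphs: the torus case via the self-duality and type-tracking of toroidal $\{4,4\}$ maps from \cite{toroids}, and the Klein bottle case via Wilson's edge/automorphism-group counts together with the (unproved, "it can be seen that") identity $Me(Me(\{4,4\}_{|m,n|}))=(\{4,4\}_{|2m,2n|})^{*}$. Your explicit appeal to the equivalence "$Me(Me(\ma))$ is $k$-orbit iff $\ma$ and $Me(\ma)$ are both self-dual" in the torus case is a minor repackaging of the same argument, and you correctly identify the doubling identity as the one step the paper also leaves as bookkeeping.
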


\section{Conclusion}

We have presented a method that helps enumerating medial type graphs. The results are presented in Table \ref{table}. 
The first row, $(a)$, of this Table gives, for each value of $1 \leq k \leq 10$ (number of orbits on a map), the number of all possible symmetry type graphs with $k$-vertices.

In row $(b)$ we say how many of the symmetry type graphs with $k$ vertices are self-dual, while  row $(c)$ given the number of them that have polarities. Thereby $(c)$ counts the number of self-dual symmetry type graphs with $k$-vetices that might be obtained from  self-dual $k$-orbit maps. Note that for $k=8$ there is a self-dual symmetry type graph with no polarities (see Figure~\ref{nopolarity}).

\begin{figure}[htbp]
\begin{center}
\includegraphics[width=6cm]{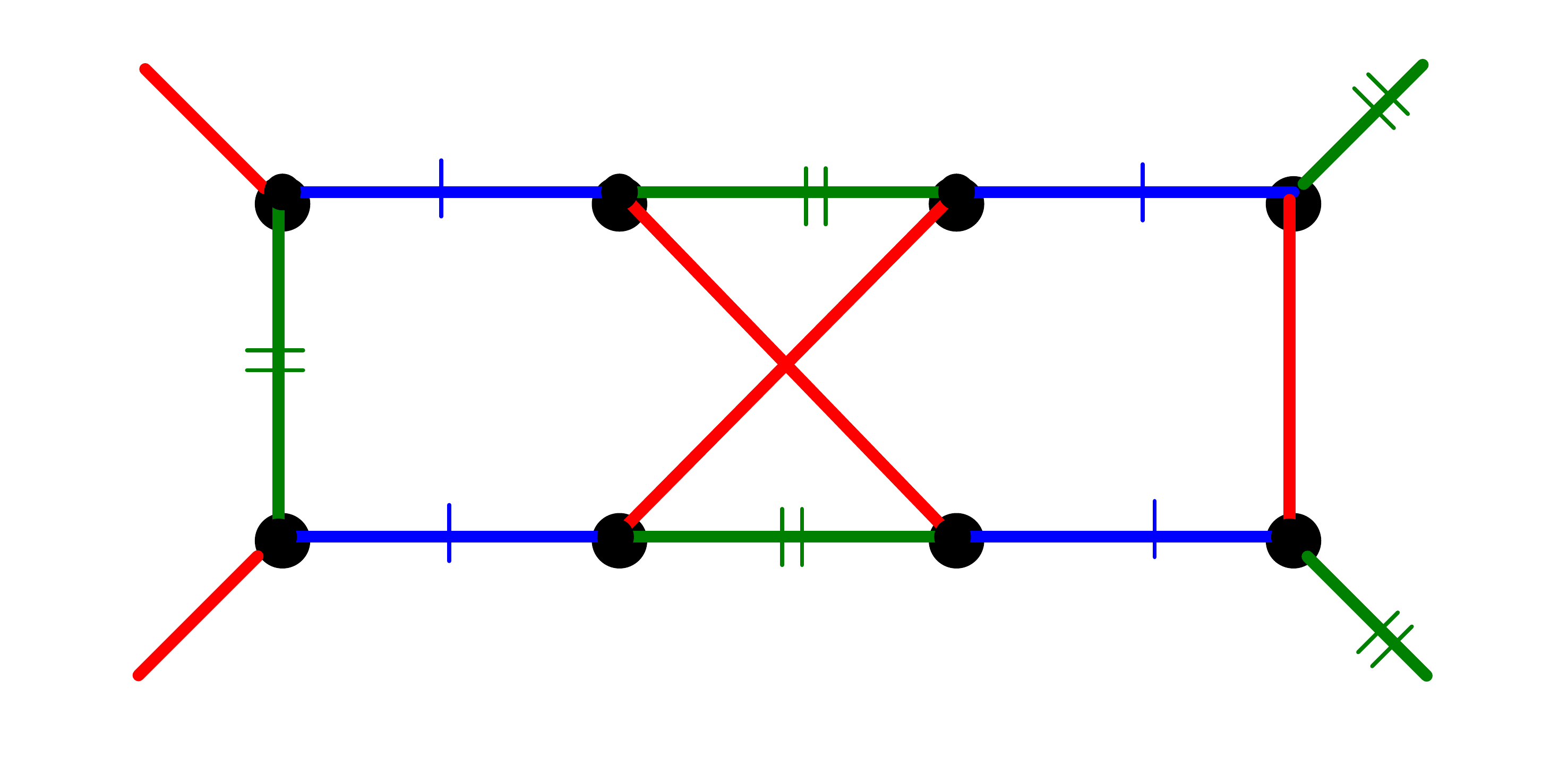}
\caption{A self-dual symmetry type graph with 8 vertices and no polarities}
\label{nopolarity}
\end{center}
\end{figure}

As we saw before, each self-dual symmetry type graph might have more than one duality (for example, it might have a proper and an improper duality). Row $(d)$ gives the number of dualitites that all self-dual symmetry type graphs with $k$ vertices have. Row $(e)$ tells us how many of them are polarities. (Again, these last ones are the ones that can arise from dualities of self-dual maps.) In other words, row $(e)$ tells us the number of extended symmetry type graphs with $k$ vertices.

Each extended symmetry type graph with $k$ vertices gives raise to a medial symmetry type graph with $k$ vertices. However, two different such extended graph may give raise to the same medial symmetry type graph. Row $(f)$ gives the number of medial symmetry type graphs with $k$ vertices that arise from extended symmetry type graphs with $k$ vertices, while row $(g)$ gives the total number of medial symmetry type graphs with $k$ vertices. 
We observe that for $1 \leq n \leq 10$, 
if we denote by $g_n$ the number in the cell $(g)$ 
corresponding to $(n+1)^{th}$ column, $g_{2k}$ can be computed in the following way.
$$g_{2k} = f_{2k} + \frac{b_k+a_k}{2}, \;\;\;\;\; g_{2k-1} = f_{2k-1}$$
where $f_n$, $b_{n}$ and $a_{n}$ are the respective values given on the cells $(f)$, $(b)$ and $(a)$ in the column $n$. We conjecture that this is the case for any integer  $n$.

\begin{table}[h!]
\centering
\footnotesize{
\begin{tabular}{|c|c|c|c|c|c|c|c|c|c|c|c|}
\hline
   $k$         & 1 & 2 & 3 & 4  & 5  & 6  & 7  &  8  &  9  & 10   &       \\
\hline
No. of types   & 1 & 7 & 3 & 22 & 13 & 70 & 67 & 315 & 393 & 1577 & $(a)$ \\
\hline
No. of self    & 1 & 3 & 1 & 8  & 3  & 12 & 7  & 45  & 25  & 91   & $(b)$ \\
 dual types    &   &   &   &    &    &    &    &     &     &      &       \\
\hline
No. of self    & 1 & 3 & 1 & 8  & 3  & 12 & 7  & 44  & 25  & 91   & $(c)$ \\
polar types    &   &   &   &    &    &    &    &     &     &      &       \\
\hline
No. of self    & 1 & 6 & 1 & 21 & 3  & 23 & 7  & 101 & 25  & 128  & $(d)$ \\
dualities      &   &   &   &    &    &    &    &     &     &      &       \\
\hline
No. of self    & 1 & 6 & 1 & 17 & 3  & 21 & 7  & 83  & 25  & 124  & $(e)$ \\
polarities     &   &   &   &    &    &    &    &     &     &      &       \\
\hline
No. of  medial& 1 & 6 & 1 & 15 & 3  & 19 & 7  & 73  & 25  & 120  & $(f)$ \\
  types from $k$-orb maps  &   &   &   &    &    &    &    &     &     &      &       \\
\hline
No. of total   & 1 & 7 & 1 & 20 & 3  & 21 & 7  & 88  & 25  & 128  & $(g)$ \\
medial types    &   &   &   &    &    &    &    &     &     &      &       \\
\hline
 \end{tabular}}
\caption{Number of symmetry type graphs, self-dual types and medials types}
\label{table}
\end{table}

\bigskip
\noindent\textit{Acknowledgments.}
The authors would like to thank Gunnar Brinkmann
and Nico Van Cleemput for computer check of our numbers in Table \ref{table}.
This work has been financed by ARRS within the EUROCORES Programme EUROGIGA (project GReGAS, N1--0011) 
of the European Science Foundation.
The research of Hubard was supported by PAPIIT-M\'exico under project IB101412. 
The research of other authors was supported in part by ARRS,  Grant P1-0294.


\begin{thebibliography}{SK}




\normalsize
\baselineskip=17pt


\bibitem{CompSymTypeGraph}
 Brinkmann, G., Van Cleemput, N., Pisanski, T.:
 {Generation of various classes of trivalent graphs}.
 Theor. Comp. Sci. (2012), in press. 

\bibitem{Conway}
 Conway, J. H., Burgiel, H., Goodman-Strauss, C.:
 {The Symmetries of Things}.
 A. K. Peters, 
 Ltd., Wellesley, MA (2008) 

\bibitem{Coxeter}
 Coxeter, H.S.M.:
 {The densities of the regular polytopes}.
 Proc. Camb. Phil. Soc., \textbf{27}  (1931) 201--211.

\bibitem{CRP}
 Coxeter, H.S.M.:
 {Regular Complex  Polytopes (2nd edition)}. 
 Cambridge University Press, 
 Cambridge (1991)

\bibitem{DressHuson87}
Dress, A.W.M., Huson, D.H.:
{On Tilings of the Plane}.
 Geometriae Dedicata, \textbf{24} (1987) 295--310.

\bibitem{DrBr96}
Dress, A., Brinkmann, G.:
{Phantasmagorical Fulleroids}.
MATCH Commun. Math. Comput. Chem., \textbf{33} (1996) 87--100.
 
\bibitem{rui}
 Duarte, R.:
 {2-Restrictedly-regular hypermaps of small genus}. 
 PhD thesis, University of Aveiro, Aveiro, Portugal (2007).

\bibitem{FowlerPisanski}
Fowler, P., Pisanski, T.:
{Leapfrog transformations and polyhedra of Clar type}.
J. Chem. Soc., Faraday Trans., \textbf{90} (1994) 2865--2871.

\bibitem{AGT}
 Godsil, C., Royle, G.:
 {Algebraic graph theory}.
 Graduate Texts in Mathematics, 207, 
 Springer-Verlag, New York, (2001). 

\bibitem{GraverWatkins}
 Graver, J. E., Watkins, M. E.:
 {Locally finite, planar, edge-transitive graphs}.
 Mem. Amer. Math. Soc., \textbf{126 (601)} (1997) 75 p.

 \bibitem{2Polyh}
 Hubard, I.:
 {Two-orbit polyhedra from groups}.
  European J. Combin., \textbf{31 (3)}  (2010) 943--960.

 \bibitem{toroids}
 Hubard, I.,; Orbani\'c, A., Pellicer, D., Weiss, A. I.:
 {Symmetries of equivelar 4-toroids}.
 Disc. Comp. Geom. \textbf{48} (2012) 1110--1136.

\bibitem{MonGp_Self-Inv}
 Hubard, I., Orbani\'c, A., Weiss, A. I.:
 {Monodromy groups and self-invariance}.
 Canad. J. Math., \textbf{61 (6)} (2009) 1300--1324.

\bibitem{isasia}
 Hubard, I.,  Weiss, A. I.:
 {Self-duality of chiral polytopes}.
 J. Combin. Theory Ser. A \textbf{111 (1)} (2005) 128--136.

\bibitem{OperationsMaps}
 Jones, G. A., Thornton, J. S.:
 {Operations on Maps, and Outer Automorphisms}.
 J. Combin. Theory, Ser. B \textbf{35 (2)} (1983) 93--103 .

\bibitem{Lins}
 Lins, S.:
 {Graph-Encoded Maps}.
 J. Comb. Theory, Ser. B \textbf{32 (2)} (1982) 171--181 

\bibitem{ETdatabase}
Orbani\'c, A.:
{Database of small non-degenerate edge-transitive maps}.
{\tt http://www.ijp.si/RegularMaps}

\bibitem{k-orbitM}
 Orbani\'c, A., Pellicer, D., Weiss, A. I.:
 {Map operation and $k$-orbit maps}.
 J. Combin. Theory, Ser. A \textbf{117 (4)}  (2009) 411--429.

\bibitem{PisanskiRandic}
 Pisanski, T., Randi\' c, M.:
 {Bridges between geometry and graph theory.}
 Geometry at work, MAA Notes, 53, 
 Math. Assoc. America, Washington, DC, (2000) 174--194.

\bibitem{PiZi}
 Pisanski, T., {\v{Z}}itnik, A.:
 {Representing graphs and maps. In  {\em Topics in topological graph theory.}}
 Edited by Lowell W. Beineke and Robin J. Wilson. Encyclopedia of Mathematics and its Applications, 128. Cambridge University Press, Cambridge, (2009) 151--180.

\bibitem{Self-DualSelf-Petrie}
 Richter, R. B., \v{S}ir\'a\v{n}, J., Wang, Y.:
 {Self-dual and self-petrie-dual regular maps}.
 J. Graph Theory, \textbf{69 (2)}, (2012) 152--159.

\bibitem{EdgeT}
 \v{S}ir\'a\v{n}, J., Tucker, T. W., Watkins, M. E.:
 {Realizing finite edge-transitive orientable maps}.
  J. Graph Theory, \textbf{37 (1)} (2001) 1--34 .

\bibitem{Wilson}
 Wilson, S. E.:
 {Operators over regular maps}.
 Pacific J. Math., \textbf{81 (2)}  (1979) 559--568.

\bibitem{KleinBottle}
 Wilson, S. E.:
 {Uniform Maps on the Klein Bottle}.
 J. Geom. Graph., \textbf{10 (2)} (2006) 161--171.

\end{thebibliography}
\end{document}